\newtheorem{defn0}{Definition}[section]
\newtheorem{prop0}[defn0]{Proposition}
\newtheorem{thm0}[defn0]{Theorem}
\newtheorem{lemma0}[defn0]{Lemma}
\newtheorem{claim0}[defn0]{Claim}
\newtheorem{corollary0}[defn0]{Corollary}
\newtheorem{example0}[defn0]{Example}
\newtheorem{remark0}[defn0]{Remark}
\newtheorem{assumption0}[defn0]{Assumption}
\newtheorem{conjecture0}[defn0]{Conjecture}
\newtheorem{notation0}[defn0]{Notation}
\newtheorem{question0}[defn0]{Question}
\newenvironment{definition}{\begin{defn0}\rm}{\end{defn0}}
\newenvironment{theorem}{\begin{thm0}}{\end{thm0}}
\newenvironment{lemma}{\begin{lemma0}}{\end{lemma0}}
\newenvironment{corollary}{\begin{corollary0}}{\end{corollary0}}
\newenvironment{remark}{\begin{remark0}\rm}{\end{remark0}}
\newenvironment{assumption}{\begin{assumption0}\rm}{\end{assumption0}}
\newcommand{\ipa}[1]{\left(#1\right)}
\newcommand{\Gal}{{\mathrm {Gal}}}
\newcommand{\ord}{\mathrm{ord}}
\newcommand{\M}{\mathrm{M}}
\newcommand{\Ind}{{\mathrm{Ind}}}
\newcommand{\PGL}{{\mathrm{PGL}}}
\newcommand{\GL}{{\mathrm{GL}}}
\newcommand{\Z}{{\mathbb Z}}
\newcommand{\A}{{\mathbb A}}
\newcommand{\Q}{{\mathbb Q}}
\newcommand{\C}{{\mathbb C}}
\newcommand{\R}{{\mathbb R}}
\newcommand{\N}{{\mathbb N}}
\newcommand{\cA}{{\mathcal A}}
\newcommand{\cF}{{\mathcal F}}
\newcommand{\cG}{{\mathcal G}}
\newcommand{\cP}{{\mathcal P}}
\newcommand{\cO}{{\mathcal O}}
\newcommand{\Hom}{{\mathrm {Hom}}}
\begin{document}

\title{Squares of toric period integrals in higher cohomology}
\author{Santiago Molina}

\newcommand{\Addresses}{{
		\bigskip
		\footnotesize
	
		\medskip
		Santiago Molina; Universitat de Lleida\\Campus Universitari Igualada - UdL
Av. Pla de la Massa, 8\\
08700 Igualada, Spain\par\nopagebreak
		\texttt{santiago.molina@udl.cat}
		
}}

\maketitle

\begin{abstract}
Thanks to the Harder-Eichler-Shimura isomorphism we can realize a quaternionic automorphic representation of a fixed weight in the cohomology space of certain arithmetic groups. For many interesting applications, it is convenient to consider the cap-product of a cohomology class in these spaces with a fundamental class associated to a maximal torus.
In the recent paper \cite{preprintsanti2}, we compute the absolute value of such a cap-product, and we relate it to special values of Rankin-Selberg L-functions. This provides a formula analogous to that of Waldspurger in higher cohomology. In this paper we compute the square of the cap-product instead of its absolute value.
\end{abstract}

%%%%%%%%%%%%%%%%%%%%%%%%%%%%%%%%%%%%%%%%%%%%%

%%%%%%%%%%%%%%%%%%%%%%%%%%%%%%%%%%%%%%%%%%%%%
%\vskip 1cm

\section{Introduction}

Let $F$ be any number field, and let $E/F$ be any quadratic extension of discriminant $D$. Let $B$ be a quaternion algebra over $F$ admitting an embedding $E\hookrightarrow B$. Let $\pi$ be an irreducible automorphic representation of $B^\times$ with trivial central character and let $\Pi$ be its Jacquet-Langlands lift to $\PGL_2/F$. The extension $E/F$ provides a maximal torus $T$ in the algebraic group $G$ associated with $B^\times/F^\times$.

Let $\chi$ be a character of $T$.
For many applications it is interesting to compute the period integral 
\[
\int_{\A_E^\times/E^\times\A_F^\times}\chi(t)f(t)d^\times t=\int_{T(\A_F)/T(F)}\chi(t)f(t)d^\times t,
\]
where $d^\times t=\prod_vd^\times t_v$ is the usual Tamagawa measure of $\A_E^\times/E^\times\A_F^\times$ and $f\in \pi$ is any  automorphic form. Indeed, this is precisely the the purpose of the celebrated \emph{Waldspurger formula}, that computes the absolute value of the period integral in terms critical values of the L-function $L(s,\Pi,\chi)$ associated with $\Pi$ and $\chi$. 

Among the many interesting applications of Waldspurger formula, we highlight the case $F=\Q$, $\Pi$ is associated with an elliptic curve, $E/\Q$ is imaginary and $B$ is  definite. In this setting, one can relate the corresponding period integral with the reduction of certain special points in the elliptic curve called Heegner points. This has been used to relate certain Euler systems with critical values of the classical $L$-function $L(s,\Pi)$, providing as a consequence rank zero instances of the \emph{Birch and Swinnerton-Dyer conjecture}. Indeed, one can think of Waldspurger formula as a rank zero counterpart of the celebrated \emph{Gross and Zagier formula}, which is one of the most important breakthroughs towards the Birch and Swinnerton-Dyer conjecture.

Going back to the general setting, let $\Sigma_B$ be the set of split archimedean places of $B$, namely, the set of archimedean completions $F_\sigma\subseteq\C$ where $B\otimes_{F} F_\sigma=\M_2(F_\sigma)$. If we write $r=\#\Sigma_B$, then the (lower) \emph{Harder-Eicher-Shimura isomorphism} realizes the automorphic representation $\pi$ in the $r$-cohomology space of certain arithmetic groups attached to our quaternion algebra $B$. More explicitly, for any $\underline{k}-2=(k_{\tilde\sigma}-2)_{\tilde\sigma}\in (2\N)^{[F:\Q]}$, let us consider the $\C$-vector spaces
\[
V(\underline{k}-2):=\bigotimes_{{\tilde\sigma}:F\hookrightarrow\C}{\rm Sym}^{k_{\tilde\sigma}-2}(\C^2).
\]
Then $V(\underline{k}-2)$ is endowed with a natural action of $G(F)$. For any open compact subgroup $U\subset G(\A_F^\infty)$ of the finite adelic points of $G$, we consider the group cohomology spaces
\begin{equation*}%\label{introcoh1}
H^r(G(F)_+,\cA^\infty(V(\underline{k}-2))^U)=\bigoplus_{g\in G(F)_+\backslash G(\A_F^\infty)/U} H^r(\Gamma_g,V(\underline{k}-2)), 
\end{equation*}
where $\Gamma_g=G(F)_+\cap gUg^{-1}$ and $G(F)_+$ is the subgroup of elements with positive norm at all real places. When $\Pi$ has weight $\underline k$, 
%the (lower) Harder-Eichler-Shimura isomorphism realizes 
the automorphic representation $\pi^\infty=\pi\mid_{G(\A_F^\infty)}$ can be realized in the spaces
\begin{equation*}%\label{introcoh2}
H^r_\ast(G(F)_+,\cA^\infty(V(\underline{k}-2)))^{\lambda}=\bigcup_{U\subset G(\A_F^\infty)}H^r(G(F)_+,\cA^\infty(V(\underline{k}-2))^U)^{\lambda},
\end{equation*}
where the superindex is given by a fixed character $\lambda:G(F)/G(F)_+\rightarrow\pm 1$ and stands for the corresponding $\lambda$-isotypical component. Such a realization is induced by a Hecke-equivariant Harder-Eichler-Shimura morphism
\[
{\rm ES}_\lambda:M_{\underline k}(U)\longrightarrow H^r(G(F)_+,\cA^\infty(V(\underline{k}-2))^U)^{\lambda},
\]
where $M_{\underline k}(U)$ is the space of $U$-invariant modular forms for $G$ of weight ${\underline k}$ (see definitions of \S \ref{AFweightk}). 

For many arithmetic applications, it is more convenient to consider the above realization of $\pi^\infty$ in the cohomology of the corresponding arithmetic groups, instead of thinking of it as a space of automorphic forms. Indeed, since the finite dimensional complex vector spaces $V(\underline{k}-2)$ admit rational models, one can show that $\pi^\infty$ is in fact the extension of scalars of a representation defined over a number field, called the coefficient field of $\pi$. Moreover, such rational representation can be realized in the rational analogues of the given cohomology spaces.

On the other hand, canonical homology classes associated to quadratic extensions $E/F$ can be defined using the group of relative units of $E^\times$. Such fundamental classes lie in a $r$-th-homology group ($r=\#\Sigma_B$) if we make the following hypothesis:
\begin{assumption}\label{assuSigmaSigma}
The set $\Sigma_B$ coincides with the set of archimedean places $\sigma$ where $E$ splits. We will write 
\[
\Sigma_B=\Sigma_T^\R\cup\Sigma_T^\C;\qquad \Sigma_T^\R=\{\sigma\in\Sigma_B;\;T(F_\sigma)=\R^\times\},\qquad\Sigma_T^\C=\{\sigma\in\Sigma_B;\;T(F_\sigma)=\C^\times\},
\]
and $r_\R=\#\Sigma_T^\R$, $r_\C=\#\Sigma_T^\C$ with $r=r_\R+r_\C$.
\end{assumption}
Under the previous assumption, we will be able to construct a fundamental class 
\[
\eta\in H_r(T(F)_+,C^0_c(T(\A_F^\infty),\Z)), 
\]
where  $C^0$ denotes the space of locally constant functions, $C^0_c$ the subspace of those with compact support, and $T(F)_+$ is again the subgroup of elements with positive norm at all real places. 

There are plenty of arithmetic applications where both the fundamental class $\eta$ and a cohomology class $\phi^\lambda\in(\pi^\infty)^U\subseteq H^r(G(F)_+,\cA^\infty(V(\underline{k}-2))^U)$ play a very important role. For instance, in \cite{Darmon2001}, \cite{guitart2017automorphic}, \cite{fornea2021plectic} and \cite{HerMol1}, classes $\phi^\lambda$ and $\eta$ in certain precise situations are used to construct $p$-adic points in elliptic curves over $F$. In the most general setting these points are called \emph{plectic points}, and they are conjectured to be defined over precise abelian extensions of $E$. A different but related application is the construction of $p$-adic L-functions. An anticyclotomic $p$-adic L-function is a $p$-adic avatar of the classical L-function $L(s,\Pi,\chi)$. In \cite{fornea2021plectic}, \cite{HerMol1} and \cite{HM2} anticyclotomic $p$-adic L-functions are constructed using $\phi^\lambda$ and $\eta$. In the spirit of the Birch and Swinnerton-Dyer conjecture, it is shown that certain derivatives of these anticyclotomic $p$-adic L-functions are related with the aforementioned plectic points. The main theorem of this note (theorem \ref{mainTHMintro}) is used in  \cite{HerMol1} and \cite{HM2} to obtain precise \emph{interpolation formulas} that link the anticyclotomic $p$-adic L-functions with the classical L-functions $L(s,\Pi,\chi)$.

Let us consider a locally polynomial character $\chi$ such that $\chi\mid_{T(F_\infty)}(t)=t^{\underline m}=\prod_{\tilde\sigma:F\hookrightarrow\C}t_{\tilde\sigma}^{m_{\tilde\sigma}}$ in a neighbourhood of 1, with $\frac{2-k_{\tilde\sigma}}{2}\leq m_{\tilde\sigma}\leq\frac{k_{\tilde\sigma}-2}{2}$. We will show that $\chi$ can be seen as a $T(F)$-invariant element of $C^0(T(\A_F),\C)\otimes V(\underline{k}-2)$.
Moreover, we have a natural $T(F)$-equivariant morphism (see \S \ref{torperiods})
\begin{equation*}\label{introvarphi}
\varphi:\cA^\infty(V(\underline{k}-2))\times \left(C^0(T(\A_F),\C)\otimes V(\underline{k}-2)\right)\longrightarrow C^0(T(\A_F),\C).    
\end{equation*}
Since the Tamagawa measure on $T(\A_F)$ provides a pairing between $C^0(T(\A_F),\C)$ and $C_c^0(T(\A_F^\infty),\Z)$,  for any $T(F)$-invariant locally polynomial character $\chi\in H^0(T(F),C^0(T(\A_F),\C)\otimes V(\underline{k}-2))$ we can consider the cup product
\[
\cP(\phi^\lambda,\chi):=\varphi(\phi^\lambda\cup\chi)\cap\eta\in \C.%\quad\mbox{for any}\quad\phi\in H^r(G(F)_+,\cA^\infty(V(\underline{k}-2)))^{\lambda}.
\]
We can think of $\cP(\phi^\lambda,\chi)$ as an analogue of the above period integral $\int_{T(\A_F)/T(F)}\chi(t)f(t)d^\times t$ in higher cohomology. In fact, in theorem \ref{mainTHM1} we relate these two concepts by means of the morphism ${\rm ES}_\lambda$. Thus, it is sensible to think that there may be a formula computing the absolute value of $\cP(\phi^\lambda,\chi)$ in analogy with Waldspurger's formula. This is precisely the main content of \cite{preprintsanti2} (see theorem \ref{THMwaldsHC1}). Nevertheless, for some applications such as the interpolation properties previously discussed, it is more convenient to compute $\cP(\phi^\lambda,\chi)^2$ instead of its absolute value, and this is the purpose of this paper.

Let $N\subset\cO_F$ be the conductor of $\Pi$ and let $c\subset\cO_F$ be the conductor of the character $\chi$. As shown in \S \ref{normnewvec}, there exists a normalized modular form $\Psi\in M_{\underline k}(U_0(N))$ generating $\Pi^\infty$, where $U_0(N)\subseteq\PGL_2(\A_F^\infty)$ is the usual subgroup of upper triangular matrices modulo $N$. Thus, it should be interesting to have an analogous test vector for $\pi^\infty$. 
In \cite{CST} a canonical subspace $V(\pi^\infty,\chi)\subset(\pi^\infty)^U$ is defined, where $U=\hat\cO_N^\times$ and $\cO_N\subset B$ is certain \emph{admissible} order of discriminant $N$ (see \S \ref{classWalds}). The space $V(\pi^\infty,\chi)$ is one dimensional when, for all finite places $v$, the local root numbers $\epsilon(1/2,\pi_v,\chi_v)=\eta_{T,v}(-1)\epsilon(B_v)$, where $\eta_T$ is the quadratic character associated with $E/F$, $\epsilon(B_v)=1$ if $B_v$ is a matrix algebra and $\epsilon(B_v)=-1$ otherwise. In corollary \ref{coroWFHC} and remark \ref{remarklastcase} we provide a recipe to compute $\cP(\phi_0^\lambda,\chi)^2$ for all $\phi_0^\lambda\in V(\pi^\infty,\chi)$, but, in order to be more explicit, in the remainder of the introduction we will restrict ourselves to the following case:
\begin{assumption}\label{assuNc}
Assume that, for all finite places $v$, either $\ord_v(c)\geq \ord_v(N)$ or $\ord_v(c)=0$, with $\ord_v(N)\leq 1$ in case $\ord_v(c)=0$ and $E_v$ non-split.
\end{assumption}
In this situation the admissible order $\cO_N$ is an Eichler order (see lemma \ref{lemmaadmiorder}). Moreover, the space $V(\pi^\infty,\chi)$ is simply $(\pi^\infty)^U$. In \cite[\S 3.2]{preprintsanti2}, we introduce a natural bilinear inner product $\langle\;,\;\rangle:M_{\underline{k}}(U)\times M_{\underline{k}}(U)\rightarrow\C$ well defined for any quaternion algebra $B$. 
If $F$ is totally real and $G=\PGL_2$ then $\langle\Phi,\bar\Phi\rangle=2^{\underline k}{\rm vol}(U)\left(\frac{\pi}{2}\right)^d(\Phi,\Phi)_{U}$, where $(\;,\;)_{U}$ is the usual Petersson inner product on  Hilbert modular forms, and the volume is considered with respect to the usual Tamagawa measure on $G$.
The embedding $E\subset B$ provides a decomposition $B=E\oplus EJ_0$, with  $J_0$ in the normalizer of $E$ in $B$ and satisfying $J_0^2=M\in F^\times$. We will show that there exists $k_0\in T(\A_F)$ such that $k_0^{-1}J_0\in G(F_\infty)_+\times w_{S^D}U$, where $w_{S^D}$ is the Atkin-Lehner involution $w_{S^D}=\prod_{v\in S^D}w_v$ with $S^D=\{v\mid N:\;{\rm ord}_v(c)=0;\;v\nmid D\}$. Write $\chi_\infty=\chi\mid_{T(F_\infty)}$, $\chi_f=\chi\mid_{T(\A_F^\infty)}$, and let $\varepsilon(S^D)$ be the eigenvalue of $w_{S^D}$ acting on $(\pi^\infty)^U$. The following result can be easily deduced from theorem \ref{mainTHM}:
\begin{theorem}\label{mainTHMintro}
Let $\chi:T(\A_F)/T(F)\rightarrow\C^\times$ be a locally polynomial character of conductor $c$ such that  $\chi\mid_{T(F_\infty)}(t_\infty)=t_\infty^{\underline{m}}\chi_0(t_\infty)$, for some locally constant character $\chi_0$ and some $\underline{m}\in\Z^{[F:\Q]}$, with $\frac{2-\underline{k}}{2}\leq \underline{m}\leq\frac{\underline{k}-2}{2}$. Then $\cP(\phi^\lambda,\chi)=0$ unless $\chi_0=\lambda$ and the local root number $\epsilon(1/2,\pi_v,\chi_v)=\chi_v\eta_{T,v}(-1)\epsilon(B_v)$, for all finite places $v$. Moreover, if this is the case, then any decomposable $\phi^\lambda=\bigotimes_{v\nmid\infty}'\phi_{v}^\lambda\in\pi^\infty\subset H_\ast^r(G(F),\cA^\infty(V(\underline{k}-2))(\lambda))$ that
 differ from $\phi_0^\lambda\in (\pi^\infty)^{U}$ in a finite set of places $\mathfrak{S}$ satisfies
   % \[
   % \cP(\phi^\lambda,\chi)^2=\frac{\varepsilon(M)2^{\#S_D}L_{c}(1,\eta_{T})^2h^2
%\bar C(\underline k,\underline m)}{\chi^{-2}(k_{J})M^{\underline m}|c^2 D|^{\frac{1}{2}}}\cdot L^S(1/2,\Pi,\chi)\cdot\frac{\langle \Phi_{0},\Phi_{0}\rangle}{\langle \Psi,\Psi\rangle}\cdot\frac{{\rm vol}(U_0(N))}{{\rm vol}(U)}\prod_{v\in \mathfrak{S}}\frac{\int_{T(F_v)}\chi_v(t_v)\langle\pi_v(t_v)\phi_{v}^\lambda,\pi_v(J)\phi_{v}^\lambda\rangle_v d^\times t_v}{\int_{T(F_v)}\chi_v(t_v)\langle\pi_v(t_v)\phi_{0,v}^\lambda,\pi_v(J)\phi_{0,v}^\lambda\rangle_v d^\times t_v},
   % \]
    \begin{equation*}\label{formulasquares}
        \cP(\phi^\lambda,\chi)^2=\frac{\varepsilon({S^D})2^{\#S_D}L_{c}(1,\eta_{T})^2h^2
\bar C(\underline k,\underline m)}{\lambda(k_{0})\chi_f^{-1}(k_{0})M^{-\underline m}|c^2 D|^{\frac{1}{2}}}\cdot L^S(1/2,\Pi,\chi)\cdot\frac{\langle \Phi,\Phi\rangle}{\langle \Psi,\Psi\rangle}\cdot\frac{{\rm vol}(U_0(N))}{{\rm vol}(U)}\prod_{v\in \mathfrak{S}}\frac{\beta_v(\phi_v^\lambda,J_0)}{\beta_v(\phi_{0,v}^\lambda,J_0)},
    \end{equation*}
    where $S_D=\{v\mid (N,D),\;\ord_v(c)=0\}$, $S:=\{v\mid (N,c)\}$, $h=\#T(\cO_F)_{+,{\rm tors}}$, the form  $\Phi\in M_{\underline k}(U)$ satisfies ${\rm ES}_\lambda(\Phi)=\phi^\lambda$, $L_{c}(1,\eta_{T})$ is the product of local L-functions at places dividing $c$, $L^S(1/2,\Pi,\chi)$ is the L-function with the local factors at places dividing $S\cup\infty$ removed, and
    \begin{eqnarray*}
        \bar C(\underline k,\underline m)&=&(-1)^{\left(\sum_{\sigma\in\Sigma_B,\tilde\sigma\mid\sigma}\frac{k_{\tilde\sigma}}{2}\right)}4^{r_\R}\cdot(-32\pi)^{r_\C}\left(\frac{1}{\pi}\right)^{d-r-r_\C}\prod_{\sigma\mid\infty}\prod_{\tilde\sigma\mid\sigma}\frac{\Gamma(\frac{k_{\tilde\sigma}}{2}-m_{\tilde\sigma})\Gamma(\frac{k_{\tilde\sigma}}{2}+m_{\tilde\sigma})}{(2\pi)^{k_{\tilde\sigma}}},\\
        \beta_v(\phi_v^\lambda,J_0)&=&\int_{T(F_v)}\chi_v(t_v)\frac{\langle\pi_v(t_v)\phi_{v}^\lambda,\pi_v(J_0)\phi_{v}^\lambda\rangle_v}{\langle\phi_{v}^\lambda,\phi_{v}^\lambda\rangle_v} d^\times t_v,
    \end{eqnarray*}
being $\langle\;,\;\rangle_v$ any $G(F_v)$-invariant bilinear inner product and $d^\times t_v$ any Haar measure of $T(F_v)$.
\end{theorem}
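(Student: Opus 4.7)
The plan is to deduce the statement from the paper's main general formula (Theorem \ref{mainTHM}) by specializing to the setting of Assumption \ref{assuNc}. Under this assumption, Lemma \ref{lemmaadmiorder} identifies the admissible order $\cO_N$ with an Eichler order of level $N$, so the test-vector space $V(\pi^\infty,\chi)$ coincides with the full $U$-invariant subspace $(\pi^\infty)^U$. Consequently the classical newform $\Phi\in M_{\underline k}(U)$ whose image under ${\rm ES}_\lambda$ is $\phi^\lambda$ is an admissible reference vector, and the Jacquet-Langlands comparison between $\Phi$ and the $\PGL_2$-newform $\Psi$ produces the ratio $\langle\Phi,\Phi\rangle/\langle\Psi,\Psi\rangle$ together with the volume quotient ${\rm vol}(U_0(N))/{\rm vol}(U)$.

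I would then use the decomposition $B=E\oplus EJ_0$ with $J_0^2=M$ to exhibit the adelic element $k_0\in T(\A_F)$ satisfying $k_0^{-1}J_0\in G(F_\infty)_+\times w_{S^D}U$. Its existence amounts to checking, place by place, that $J_0$ is conjugate by an element of the torus to the Atkin-Lehner involution $w_v$ at each $v\in S^D$, which is possible precisely because at such places $B_v$ is split, $\chi_v$ is unramified and $v\nmid D$. Applying $\pi(J_0)$ to the $U$-fixed vector $\phi_{0,v}^\lambda$ then produces the Atkin-Lehner eigenvalue $\varepsilon_v$ at each $v\in S^D$ (whose product yields $\varepsilon(S^D)$), together with the torus-translation factor $\lambda(k_0)\chi_f^{-1}(k_0)M^{-\underline m}$. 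The remaining local factors of Theorem \ref{mainTHM} are then evaluated explicitly: at places $v\in S_D$ (where $B_v$ is a division algebra and $\chi_v$ is unramified) the local integrals collapse to a combinatorial constant contributing $2^{\#S_D}$; at places $v\mid c$ they assemble to $L_{c}(1,\eta_{T})^2|c^2D|^{-1/2}$; at the remaining finite places one recovers the Euler product $L^S(1/2,\Pi,\chi)$. The archimedean constant $\bar C(\underline k,\underline m)$ follows by computing, at each $\sigma\mid\infty$, the integral of the polynomial character $t^{\underline m}$ against the appropriate matrix coefficient of the discrete series of weight $\underline k$, yielding the Gamma quotients $\Gamma(k_{\tilde\sigma}/2-m_{\tilde\sigma})\Gamma(k_{\tilde\sigma}/2+m_{\tilde\sigma})/(2\pi)^{k_{\tilde\sigma}}$, the sign $(-1)^{\sum k_{\tilde\sigma}/2}$ and the numerical prefactors $4^{r_\R}$, $(-32\pi)^{r_\C}$ and $\pi^{-(d-r-r_\C)}$ according to the split-real, split-complex and non-split archimedean components.

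The main obstacle is the bookkeeping: each local factor of Theorem \ref{mainTHM} must be evaluated in the two regimes distinguished by Assumption \ref{assuNc} (namely $\ord_v(c)\geq\ord_v(N)$ versus $\ord_v(c)=0$ with $\ord_v(N)\leq 1$), and the resulting constants must be carefully gathered and matched against the statement. In particular, the distinguishing feature of the square $\cP^2$ as opposed to $|\cP|^2$—namely the appearance of the twisted matrix coefficient $\beta_v(\phi_v^\lambda,J_0)$ in place of the usual Ichino-Waldspurger local factor—must be tracked through every place, and the torus-translation phase $\lambda(k_0)\chi_f^{-1}(k_0)M^{-\underline m}|c^2D|^{1/2}$ in the denominator must be made to agree with the precise Atkin-Lehner normalization chosen in the general formulation.
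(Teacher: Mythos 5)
Your proposal correctly identifies the paper's route: Theorem~\ref{mainTHMintro} is the specialization of Theorem~\ref{mainTHM} to the setting of Assumption~\ref{assuNc}, where Lemma~\ref{lemmaadmiorder} shows $\cO_N$ is an Eichler order (so $V(\pi^\infty,\chi)=(\pi^\infty)^U$) and gives the decomposition $J=k_Jw_J$ with $w_J\in w_{S^D}U$. However, most of what you describe as ``evaluating the remaining local factors'' — the constants $2^{\#S_D}$, $L_c(1,\eta_T)^2|c^2D|^{-1/2}$, the Euler product $L^S(1/2,\Pi,\chi)$, and the archimedean constant $\bar C(\underline k,\underline m)$ — already appears verbatim in the statement of Theorem~\ref{mainTHM}, and ultimately comes from Theorem~\ref{THMwaldsHC1} quoted from \cite{preprintsanti2}; none of that requires re-derivation in the passage from \ref{mainTHM} to \ref{mainTHMintro}. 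The only genuine step in the deduction is the one you merely allude to at the end: taking $J$ proportional to $J_0$, packaging the archimedean component $J_2$ (with $\imath(J_{2,\sigma})=\big(\begin{smallmatrix}\tilde\sigma M_J&\\&1\end{smallmatrix}\big)$) and the finite component $k_J$ into a single $k_0\in T(\A_F)$, and using $\chi\mid_{T(F_\infty)}(t_\infty)=t_\infty^{\underline m}\lambda(t_\infty)$ to rewrite $\chi^{-1}(k_J)M_J^{-\underline m}$ as $\lambda(k_0)\chi_f^{-1}(k_0)M^{-\underline m}$. That bookkeeping is the whole content of the ``easy deduction'' and deserves to be carried out rather than deferred to the closing sentence, but its omission mirrors the paper's own brevity and does not constitute a gap.
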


\subsection*{Acknowledgements.}
This project has received funding from the projects PID2021-124613OB-I00 and PID2022-137605NB-I00 from Ministerio de Ciencia e innovaci\'on.

\subsection{Notation}\label{not}

We write $\hat\Z:=\prod_{\ell}\Z_\ell$ and $\hat R:=R\otimes\hat\Z$, for any ring $R$.
Let $F$ be a number field with integer ring $\cO_F$, and let $\A_F$ and $\A_F^\infty=\hat\cO_F\otimes\Q$ be its ring of adeles and finite adeles, respectively. For any place $v$ of $F$, we write $F_v$ for the corresponding completion. If $v$ is non-archimedean, namely $v\mid p$, write $\cO_{F,v}$ its integer ring, ${\rm ord}_v(\cdot)$ its valuation, $\kappa_v$ its residue field, $\varpi_v$ a uniformizer, $d_{F_v}$ its different over $\Q_p$ and $q_v=\#\kappa_v$. Write $\Sigma_F$ for the set of archimedean places of $F$. The fact that $\sigma\in\Sigma_F$ will be usually denoted by $\sigma\mid\infty$. For any embedding $\tilde\sigma:F\hookrightarrow\C$ whose equivalence class is $\sigma\in\Sigma_F$, we will write $\tilde\sigma\mid\sigma$. We define 
\[
\Z^{\Sigma_F}:=\left\{\underline{k}=(k_\sigma)_{\sigma\in\Sigma_F}; k_\sigma=(k_{\tilde\sigma})_{\tilde\sigma\mid\sigma}\in \Z^{[F_\sigma:\R]}\right\}\simeq\Z^{[F:\Q]}.
\]
Given $\underline{k}\in\Z^{\Sigma_F}$ and $x\in F_\infty=\prod_{\sigma\mid\infty}F_\sigma$, we write 
\[
x^{\underline{k}}:=\prod_{\sigma\in\Sigma_F}\prod_{\tilde\sigma\mid\sigma} \tilde\sigma(x)^{k_{\tilde\sigma}}.
\]

Let $B$ be a quaternion algebra over $F$ with maximal order $\cO_B$.
Let $G$ be the algebraic group associated with the group of units of $B$ modulo its center, namely, for any $\cO_F$-algebra $R$
\[
G(R):=(\cO_B\otimes_{\cO_F}R)^\times/R^\times.
\]
%where $\cO_B\subset B$ is a maximal order. 
Write $\Sigma_B$ for the set of split archimedean places of $B$. 
%Let $H\subset G(F)$ be a subgroup (it can be $G(F), G(F)_+$, $T(F)$ or $T(F)_+$). 

Let $E/F$ be a quadratic extension and assume that we have a fixed embedding $E\hookrightarrow B$ such that $\cO_{c_0}=E\cap \cO_B$ is an order of conductor $c_0$. We write 
\[
T(R):=(\cO_{c_0}\otimes_{\cO_F}R)^\times/R^\times.
\]
This implies that $T\subset G$ as algebraic groups. 

Throughout the paper we will assume that assumption \ref{assuSigmaSigma} holds. Namely, $\Sigma_B$ is also the set of archimedean places where $E/F$ splits. Thus,
\[
\Sigma_B=\Sigma_{T}^{\R}\cup\Sigma_T^\C,\qquad \Sigma_{T}^{\R}=\{\sigma\mid\infty,\;T(F_\sigma)=\R^\times\},\quad\Sigma_{T}^{\C}=\{\sigma\mid\infty,\;T(F_\sigma)=\C^\times\}.
\]
For any $\sigma\in\Sigma_F$, let $T(F_\sigma)_0\subset T(F_\sigma)$ be the intersection of all connected subgroups $N$ for which the quotient $T(F_\sigma)/N$ is compact. Write also $T(F_\sigma)_+$ for the connected component of $1$ in $T(F_\sigma)$. Depending on the ramification type of $\sigma$, we can visualize $T(F_\sigma)$, $T(F_\sigma)_+$, $T(F_\sigma)_0$ in the following table:
\[\]
\begin{center}
\begin{tabular}{ c| c c c c }
ramification &$T(F_\sigma)$ & $T(F_\sigma)_+$&$T(F_\sigma)_0$&$T(F_\sigma)/T(F_\sigma)_0$ \\ 
 \hline
 $\Sigma_{T}^{\R}$ & $\R^\times$ & $\R_+$& $\R_+$& $\pm 1$ \\  
 $\Sigma_{T}^{\C}$ & $\C^\times$ & $\C^\times$ & $\R_+$ & $S^1$ \\
  $\Sigma_F\setminus\Sigma_{B}$ & $\C^\times/\R^\times$ & $\C^\times/\R^\times$ & $1$ & $\C^\times/\R^\times$ 
\end{tabular}
\end{center}
\[\]

Given $t_\infty\in T(F_\infty)=E^\times_\infty/F^\times_\infty$, write $\tilde t_\infty=(\tilde t_\sigma)_\sigma\in E_\infty^\times$ for a representative. If $E_\sigma/F_\sigma$ splits, write $(\tilde t_\sigma)_1, (\tilde t_\sigma)_2 \in F_\sigma^\times$ for the two components of $\tilde t_\sigma\in F_\sigma^\times\times F_\sigma^\times$. If $E_\sigma/F_\sigma$ does not split, write $(\tilde t_\sigma)_1, (\tilde t_\sigma)_2 \in E_\sigma^\times $ for the image of $\tilde t_\sigma$ under the two $F_\sigma$-isomorphisms. In this last non-split case, we choose an embedding $\tilde\sigma_E:E_\sigma\rightarrow\C$ above each $\tilde\sigma$. For any $\underline{m} \in	\Z^{\Sigma_F}$, we write
\[%
t_\infty^{\underline{m}}:=\prod_{\sigma\mid\infty}\prod_{\tilde\sigma\mid\sigma}\tilde\sigma_E\left(\frac{(\tilde t_\sigma)_1}{(\tilde t_\sigma)_2}\right)^{m_{\tilde\sigma}}\in\C.
\]%end{equation}

\subsection{Haar Measures}\label{haarmeasures}

For any number field $F$ and any place $v$, we choose the Haar measure $dx_v^\times$ for $F_v^\times$:
\[
d^\times x_v=\zeta_v(1)|x_v|_v^{-1}dx_v;\qquad\mbox{where}\quad \left\{\begin{array}{ll}
 dx_v\mbox{ is $[F_v:\R]$ times the usual Lebesgue measure,}    &v\mid\infty;  \\
   dx_v\mbox{ is the Haar measure of $F_v$ such that }{\rm vol}(\cO_{F,v})=|d_{F_v}|_v^{1/2},  &v\nmid\infty, 
\end{array}\right.
\]
with $\zeta_v(s)=(1-q_v^{-s})^{-1}$, if $v\nmid\infty$, $\zeta_v(s)=\pi^{-s/2}\Gamma(s/2)$, if $F_v=\R$, and $\zeta_v(s)=2(2\pi)^{-s}\Gamma(s)$, if $F_v=\C$. One easily checks that, if $v$ is non-archimedean, ${\rm vol}(\cO_{F_v}^\times)=|d_{F_v}|_v^{1/2}$ as well.

The product of such measures provides a Tamagawa measure $d^\times x$ on $\A_F^\times/F^\times$. In fact, such Haar measure satisfies
\[
{\rm Res}_{s=1}\int_{x\in\A_F^\times/F^\times,\;|s|\leq 1}|x|^{s-1}d^\times x={\rm Res}_{s=1}\Lambda_F(s),
\]
where $\Lambda_F(s)=\zeta_F(s)\prod_{v\mid\infty}\zeta_{F_v}(s)$, is the completed Riemann zeta function associated with $F$. This implies that, if we choose $d^\times t$ to be the quotient measure for $T(\A_F)/T(F)=\A_E^\times/\A_F^\times E^\times$, one has that ${\rm vol}(T(\A_F)/T(F))=2L(1,\eta_T)$.

For the group $G$ we just choose the usual Haar measure so that ${\rm vol}(G(\A_F)/G(F))=2$.  

\subsection{Finite dimensional representations}\label{findimreps}

Let $k$ be a positive even integer.
Let $\cP(k)={\rm Sym}^k(\C^2)$ be the space of polynomials in 2 variables homogeneous of degree $k$ with $\PGL_2(\C)$-action:
\[
\left(\left(\begin{array}{cc}a&b\\c& d\end{array}\right)P\right)(X,Y)=(ad-bc)^{-\frac{k}{2}}P(aX+cY,bX+dY),\qquad P\in\cP(k).
\]  
Let us denote $V(k)=\cP(k)^\vee$ with dual $\PGL_2(\C)$-action:
\[
(g\mu)(P)=\mu(g^{-1}P),\qquad \mu\in V(k).
\]  
Notice that $V(k)\simeq \cP(k)$ by means of the isomorphism
\begin{equation}\label{dualVP}
V(k)\longrightarrow\cP(k),\qquad\mu\longmapsto\mu((Xy-Yx)^k).
\end{equation}

\subsubsection{Polynomials and torus}\label{polandtor}

Recall that the fixed embedding $\iota:E\hookrightarrow B$, provides an embedding $B\hookrightarrow{\rm M}_2(E)$ that we will fix throughout the article. Indeed, $\iota$ implies that $B=E\oplus EJ_0$, where $J_0$ normalizes $E$ and $J_0^2\in F^\times$. Hence, we can define the embedding
\begin{equation}\label{embEinB}
    B\hookrightarrow\M_2(E);\qquad e_1+e_2J_0\longmapsto \left(\begin{array}{cc}
    e_1 & J_0^2e_2 \\
    \bar e_2 & \bar e_1
\end{array}\right),
\end{equation}
where $(e\mapsto\bar e)\in \Gal(E/F)$ denotes the non-trivial automorphism. 
For a given $\bar\sigma:F\hookrightarrow\C$, the composition of $B\hookrightarrow {\rm M}_2(E)$ and the fixed extension $\bar\sigma_E:E\hookrightarrow\C$ of $\bar\sigma$, gives rise to an embedding $G(F_\sigma)\hookrightarrow\PGL_2(\C)$, if $\bar\sigma\mid\sigma\mid\infty$. This provides an action of $G(F_\infty)$ on $V(\underline k):=V(\underline k)_\C$ and $\cP(\underline k):=\cP(\underline k)_\C$, for any $\underline k=(k_{\bar\sigma})\in (2\N)^{\Sigma_F}$. %indexed by the embeddings $\bar\sigma$ of $F$. 
The subspaces $V(\underline k)_{\bar\Q}$and $\cP(\underline k)_{\bar\Q}$ are invariant under the action of the subgroup $G(F)\subseteq G(F_\infty)$.

The composition $E\stackrel{\iota}{\hookrightarrow}B\hookrightarrow {\rm M}_2(E)$ maps $e$ to $\big(\begin{smallmatrix} e&\\&\bar e\end{smallmatrix}\big)$. This implies that we have a $T(F_\infty)$-equivariant morphism
\begin{equation}\label{PtoCoverC}
    \cP(\underline k)\longrightarrow C(T(F_\infty),\C);\qquad \bigotimes_{\bar\sigma} P_{\bar\sigma}\longmapsto\ipa{(t_\sigma)_{\sigma\mid\infty}\mapsto\prod_{\sigma\mid\infty}\prod_{\bar\sigma\mid\sigma}P_{\bar\sigma}\ipa{1,\bar\sigma_E\ipa{\frac{t_\sigma}{\bar t_\sigma}}}\bar\sigma_E\ipa{\frac{t_\sigma}{\bar t_\sigma}}^{-\frac{k_{\bar\sigma}}{2}}}.
\end{equation}

\section{Fundamental classes}\label{fundclass}

Under assumption \ref{assuSigmaSigma},
 the $\Z$-rank of $T(\cO_F)$ is $r=\#\Sigma_B$ by the Dirichlet Unit Theorem.  Write 
\[
T(\cO_F)_+:=T(\cO_F)\cap T(F_\infty)_+,\qquad T(F_\infty):=\prod_{\sigma\mid\infty}T(F_\sigma)_+.
\] 
Notice that $T(F_{\infty})_0$ is isomorphic to $\R^{r}$ by means of the homomorphism $z\mapsto (\log|\sigma z|)_{\sigma\in\Sigma_T}$. Moreover, under such isomorphism the image of $T(\cO_F)_+$ becomes a $\Z$-lattice $\Lambda$ in $\R^r$. Thus, $T(F_{\infty})_0/\Lambda$ is a $r$-dimensional torus and the \emph{fundamental class} $\xi$ is a generator of $H_{r}(T(F_{\infty})_0/\Lambda,\Z)\simeq \Z$. 
%Notice that
%\[
%T(F_\infty)_+= T(F_{\infty})_0\times S,
%\] 
%where $S$ is isomorphic to a cartesian product of $S^1$. Let $H=T(\cO_F)_+\cap S$. Since $T(\cO_F)_+$ is discrete and $S$ is compact, $H$ is finite. It is easy to check that $H$ is the torsion subgroup of $T(\cO_F)_+$ and $T(\cO_F)_+\simeq\Lambda\times H$ (see \cite[lemma 3.1]{HerMol1}). In particular,
%\begin{equation}\label{eqaster}
%T(F_\infty)_+/T(\cO_F)_+\simeq T(F_{\infty})_0/\Lambda\times S/H.
%\end{equation}
%Since $M:=T(F_{\infty})_0\simeq\R^r$, the de Rham complex $\Omega^\bullet_M$ is a resolution for $\R$. This implies that we have an edge morphism of the induced spectral sequence
%\[
%e:H^0(\Lambda,\Omega_M^r)\longrightarrow H^r(\Lambda,\R).
%\]
As seen in \cite[\S 3]{HerMol1}, we can identify any $H_{r}(T(F_\infty)_0/\Lambda,\Z)$ with a group cohomology space $H_{r}(\Lambda,\Z)$. %by means of the relation
%\[
%\int_c \omega=e(\omega)\cap c,\qquad \omega\in H^0(\Lambda,\Omega_M^r)=\Omega_{M/\Lambda}^r.
%\]
In particular, we can think of the fundamental class as an element $\xi\in H_r(\Lambda,\Z)$. %satisfying
%\begin{equation}\label{eqeint}
%e(\omega)\cap \xi=\int_{T(F_\infty)_0/\Lambda} \omega,\qquad \omega\in H^0(\Lambda,\Omega_M^r).
%\end{equation}

Let us consider the compact subgroup $U:=T(\hat\cO_F)\subset T(\A_F^\infty)$, and write
\[
T(F)_+:=T(F)\cap T(F_\infty)_+,\qquad {\rm Cl}(T)_+:=T(\A_F^\infty)/UT(F)_+.
\]
Notice that the class group $ {\rm Cl}(T)_+$ is finite. %and, since $T(\cO_F)_+=T(F)_+\cap U$, we have an exact sequence 
%\[
%0\longrightarrow T(F)_+/T(\cO_F)_+\longrightarrow T(\A_F^\infty)/U\stackrel{}{\longrightarrow} {\rm Cl}(T)_+\longrightarrow 0.
%\]
Hence, we can fix preimages in $\bar t_i\in T(\A_F^{\infty})$ of all the elements $t_i\in {\rm Cl}(F)$ and consider the compact set
\[
\cF:=\bigcup_i\bar t_i U\subset T(\A_F^\infty).%\qquad\cF:=T(F_\infty)/T(F_{\Sigma_T})_0\times\cF^\infty\subset T(\A_F)/T(F_{\Sigma_T})_0.
\]
%By construction $T(F_\infty)_+/T(\cO_F)_+\times\cF$ is a fundamental domain of $T(\A_F)$ for the action of $T(F)$ %Indeed,
%for any $t\in T(\A_F)$, there exists a unique $\tau_t\in T(F)/T(\cO_F)_+$ such that $\tau_t^{-1}t\in T(F_{\infty})_+\times\cF$ 
%(see \cite[Lemma 3.2]{HerMol1}). In particular,
%\begin{equation}\label{eqaster2}
%T(\A_F)=\bigsqcup_{\tau\in T(F)/T(\cO_F)_+}\tau(T(F_\infty)_+\times\cF).
%\end{equation}
The set of continuous functions $C(\cF,\Z)$ has an action of $T(\cO_F)_+$ (since $\cF$ is $U$-invariant) and the characteristic function $1_\cF$ is $T(\cO_F)_+$-invariant. %Choose any preimage $\bar\xi\in H_r(\cO,\Z)$ of $\xi$ given by the morphism $\cO\rightarrow\Lambda$ and 
Let us consider the canonical class
\[
\eta=1_\cF\cap\xi\in H_r(T(\cO_F)_+,C(\cF,\Z));\qquad 1_\cF\in H^0(T(\cO_F)_+,C(\cF,\Z)),
\]  
where $\xi\in H_r(T(\cO_F)_+,\Z)$ is, by abuse of notation, the image of $\xi$ through the correstriction morphism.

%By \eqref{eqaster2}, the natural $T(\cO_F)_+$-equivariant embedding 
%\[
%\iota:C(\cF,\Z)\hookrightarrow C_c^0(T(\A_F),\Z),\qquad \iota \phi(x_\infty,x^\infty) =1_{T(F_\infty)_+}(x_\infty)\cdot\phi(x^\infty)\cdot1_\cF(x^\infty),\quad x_\infty\in T(F_\infty);\;x^\infty\in T(\A_F^\infty).
%\]
%provides 
By \cite[lemma 3.3]{HerMol1}, we have an isomorphism of $T(F)$-modules
$\Ind_{T(\cO_F)_+}^{T(F)}(C(\cF,\Z))\simeq C_c^0(T(\A_F),\Z)$,
where $C_c^0(T(\A_F),\Z)$ is the set of functions in $C^0(T(\A_F),\Z)$ that are compactly supported when restricted to $T(\A_F^\infty)$.
Thus, by Shapiro's Lemma one may regard 
\begin{equation}\label{fundclassdef}
    \eta\in H_r(T(F),C_c^{0}(T(\A_F),\Z)).
\end{equation}

\section{Toric period integrals in higher cohomology}

\subsection{Modular forms of weight $\underline k$}\label{AFweightk}

Let $\underline{k}=(k_\sigma)\in (2\N)^{\Sigma_F}$. 
We want to define the space of modular forms for $G$ of weight $\underline k$ an level $U$, where $U\subset G(\A_F^\infty)$ is an open compact subgroup. For this, let us consider $\sigma\in\Sigma_B$ and denote by $D(k_\sigma)$ the $(\mathfrak{gl}_{2,\R},O(2))$-module of discrete series of weight $k_\sigma$ and trivial central character, when $F_\sigma=\R$, or the $(\mathfrak{gl}_{2,\C},U(2))$-module $\pi(\mu_{k_\sigma},\mu_{k_\sigma}^{-1})$ with $\mu_{k_\sigma}(t)=\tilde\sigma_1(t)^{(k_{\tilde\sigma_1}-1)/2}\tilde\sigma_2(t)^{(1-k_{\tilde\sigma_2})/2}$ and $k_\sigma=(k_{\tilde\sigma_1},k_{\tilde\sigma_2})_{\tilde\sigma_i\mid\sigma}$, when $F_\sigma=\C$. %Since the ramification sets of $T$ and $G$ coincide, 
If we fix an isomorphism $E_\sigma\simeq F_\sigma^2$, then the identification $B\otimes_FE\simeq\M_2(E)$ induced by \eqref{embEinB} provides an isomorphism %$B_\sigma \simeq\M_2(F_\sigma)$ mapping $E_\sigma$ to the group of diagonal matrices. We obtain that 
$G(F_\sigma)\simeq\PGL_2(F_\sigma)$ that sends $T(F_\sigma)$ to the diagonal torus. By means of such identifications and the actions defined in \S \ref{polandtor}, the tensor product
\[
D(\underline{k})=\bigotimes_{\sigma\in\Sigma_B}D(k_\sigma)\otimes\bigotimes_{\sigma\in\Sigma_F\setminus\Sigma_B}V(k_\sigma-2),
\]
is a natural $(\cG_\infty,K_\infty)$-module, where $\cG_\infty$ is the Lie algebra of $G(F_\infty)$ and $K_\infty$ is a maximal compact subgroup. Thus, the space of modular forms for $G$ of weight $\underline k$ an level $U$ can be described as
\begin{equation}\label{defS22}
M_{\underline{k}}(U):=\Hom_{(\cG_\infty,K_\infty)}\ipa{D(\underline{k}),\cA(G)^{U}},    
\end{equation}
where $\cA(G)^{U}$ is the space of $U$-invariant automorphic forms for $G/F$. In \cite[\S 3.2]{preprintsanti2}, a natural bilinear inner product $\langle\;,\;\rangle:M_{\underline{k}}(U)\times M_{\underline{k}}(U)\rightarrow\C$ is introduced. In order to provide its precise definition, we need to introduce a natural morphism
\begin{equation}\label{deltasdef}
    \underline{\delta s}_\lambda:V(\underline{k}-2)(\lambda)\longrightarrow D(\underline{k}),
\end{equation}
for a given character $\lambda:G(F_\infty)/G(F_\infty)_+\rightarrow\pm1$.
For any $\sigma\in\Sigma_B$, write $\cG_\sigma$ for the Lie algebra of $G(F_\sigma)$ and $K_\sigma$ for the corresponding maximal compact subgroup. If $\lambda_\sigma=\lambda\mid_{G(F_\sigma)}$, then there exists a non-trivial extension of $(\cG_\sigma,K_\sigma)$-modules
\[
0\longrightarrow D(k_\sigma)\stackrel{\iota_{\lambda_\sigma}}{\longrightarrow} I(\lambda_\sigma)\stackrel{p_{\lambda_\sigma}}{\longrightarrow} V(k_\sigma-2)(\lambda_\sigma)\longrightarrow 0.
\]
Moreover, $p_{\lambda_\sigma}$ admits a unique $K_\sigma$-equivariant section $s_{\lambda_\sigma}$.
If we consider $\delta_\sigma^T={\rm exp}_\sigma(1)$, the image of 1 through the exponential map ${\rm exp}_\sigma:F_\sigma\stackrel{\simeq}{\rightarrow} {\rm Lie}(F_\sigma^\times)={\rm Lie}T(F_\sigma)\subset \cG_\sigma$, we can define
\begin{equation}\label{defdeltaT}
    \delta s_{\lambda_\sigma}:V(k_\sigma-2)(\lambda_\sigma)\longrightarrow D(k_\sigma);\qquad \delta s_{\lambda_\sigma}(\mu)=\iota_{{\lambda_\sigma}}^{-1}(\delta^T_\sigma(s_{\lambda_\sigma}(\mu))-s_{\lambda_\sigma}(\delta^T_\sigma\mu)).
\end{equation}
The tensor product of such $\delta s_{\lambda_\sigma}$ provides $\underline{\delta s}_\lambda$.
Finally, the pairing $\langle\;,\;\rangle$ is given by
\begin{equation*}\label{eqpeterssonprod}
    \langle\Phi_1,\Phi_2\rangle:=\int_{G(F)\backslash G(\A_F)}\Phi_1\Phi_2\left(\underline{\delta s}_\lambda(\Upsilon)\right)(g,g)d^\times g,
\end{equation*}
where $d^\times g$ is the usual Tamagawa measure with volume ${\rm vol}(G(\A_F)/G(F))=2$ and
\[
\bigotimes_{\sigma\mid\infty}\Upsilon_\sigma=\Upsilon=\left|\begin{array}{cc}
    x_1 & y_1 \\
    x_2 & y_2
\end{array}\right|^{\underline k-2}\in \cP(\underline{k}-2)\otimes \cP(\underline{k}-2)\simeq V(\underline{k}-2)\otimes V(\underline{k}-2).
\]
By \cite[lemma 4.10]{preprintsanti2} the above definition of $\langle\;,\;\rangle$ is independent of $\lambda$.
As explained in \cite[remark 3.2]{preprintsanti2}, if $F$ is totally real and $G=\PGL_2$ then we have a natural identification between $M_{\underline{k}}(U)$ and the space of Hilbert modular forms of weight $\underline{k}$. Under this identification, $\langle\Phi,\bar\Phi\rangle=2^{\underline k}{\rm vol}(U)\left(\frac{\pi}{2}\right)^d(\Phi,\Phi)_{U}$, where $(\;,\;)_{U}$ is the usual Petersson inner product.

\subsection{Cohomology of arithmetic groups}\label{cohoAG}

For any finite dimensional $G(F)$-representation $V$ over $\C$ and any open compact subgroup $U\subset G(\A_F^\infty)$, we define $\cA^{\infty}(V)^U$ to be the space of functions
\[
\phi: G(\A_F^{\infty})/U\longrightarrow V.
\]
%We write $\cA^{\infty}(N)^U:=\cA^{\infty}(\C,N)^U$. 
Notice that $\cA^{\infty}(V)^U$ has natural action of $G(F)$: %and $G(\A_F^\infty)$: 
\[
(\gamma\phi)(g)=\gamma\phi(\gamma^{-1}g);\qquad \phi\in \cA^{\infty}(V)^U,\quad \gamma\in G(F).%\qquad (h\phi)(g)=\phi(gh), 
\]
%and $ h,g\in G(\A_F^\infty)$.
We denote by $\cA^{\infty}(V)^U(\lambda)$ the twist of $\cA^{\infty}(V)^U$ by a character $\lambda:G(F)\rightarrow \C^\times$. 

As shown in \cite[\S 3.1]{preprintsanti2}, for any character $\lambda:G(F)/G(F)_+\rightarrow{\pm 1}$ there exists a natural Hecke-equivariant morphism
\[
{\rm ES}_\lambda: M_{\underline{k}}(U)\longrightarrow H^{r}(G(F),\cA^{\infty}(V(\underline{k}-2))^U(\lambda)).
\]
%where $r=\#\Sigma_B$. 
By means of ${\rm ES}_\lambda$ we can realize the automorphic representations generated by elements of $M_{\underline{k}}(U)$ in the cohomology space 
\[
H_\ast^{r}(G(F),\cA^{\infty}(V(\underline{k}-2))(\lambda)) =\varinjlim_U H^{r}(G(F),\cA^{\infty}(V(\underline{k}-2))^U(\lambda)).
\]
Notice that this setting fits with the classical scenario because by Shapiro 
\[
H^r(G(F)_+,\cA^\infty(V(\underline{k}-2))^U)=\bigoplus_{g\in G(F)_+\backslash G(\A_F^\infty)/U} H^r(\Gamma_g,V(\underline{k}-2)), \qquad \Gamma_g=G(F)_+\cap gUg^{-1}.
\]
Moreover, by \cite{ESsanti} the morphism ${\rm ES}_\lambda$ fits with the classical Eichler-Shimura morphism (up to possibly a factor, see \cite[remark 3.1]{preprintsanti2}).

\subsection{Toric periods}\label{torperiods}

%From this section onwards we will assume that the ramification sets of $T$ and $G$ coincide, namely, $\Sigma_T=\Sigma_B$ and $u=r$. 
Let us fix a character $\lambda:G(F)/G(F)_+=G(F_\infty)/G(F_\infty)_+\rightarrow\pm1$. %, and let us consider the morphism ${\rm ES}_\lambda$. Hence, 
On the one hand, for any $\Phi\in M_{\underline{k}}(U)$ we can consider ${\rm ES}_\lambda(\Phi)\in H^{r}(G(F),\cA^{\infty}(V(\underline{k}-2))^U(\lambda))$. On the other hand, we have a natural $T(F)$-equivariant morphism  
\begin{eqnarray*}
\varphi:C^{0}(T(\A_F),\C)\otimes V(\underline{k}-2)\otimes \cA^{\infty}(V(\underline{k}-2))^U(\lambda)&\longrightarrow& C^{0}(T(\A_F),\C),\\
\varphi((f\otimes\mu)\otimes\Phi)(z,t)&=&f(z,t)\cdot\lambda(z)\cdot\Phi(t)(\mu(Xy-Yx)^{\underline{k}-2})
\end{eqnarray*}
for all $z\in T(F_\infty)$, $t\in T(\A_F^{\infty})$. Moreover, thanks to the morphism \eqref{PtoCoverC}, we can regard certain locally polynomial functions as elements of $C^{0}(T(\A_F),\C)\otimes V(\underline{k}-2)$.
In particular, those characters $\chi:T(\A_F)/T(F)\rightarrow\C^\times$ that, when restricted to a small neighborhood of 1 in $T(F_\infty)$, are of the form 
\[
\chi(t_\infty)=t_\infty^{\underline{m}},\qquad \underline{m}\in\Z^{\Sigma_F},\quad \frac{2-\underline{k}}{2}\leq \underline{m}\leq\frac{\underline{k}-2}{2}.
\]
We call these $\chi$ locally polynomial characters of degree at most $\frac{\underline{k}-2}{2}$.
%where
%\[ 
%\frac{t_{\sigma}}{\bar t_\sigma}=\left\{\begin{array}{ll}t_\sigma,&\sigma\in \Sigma_T,\; t_\sigma\in F_\sigma^\times=T(F_\sigma),\\ \frac{x_\sigma}{\bar x_\sigma},&\sigma\not\in\Sigma_T,\;t_\sigma=x_\sigma\R^\times\in \C^\times/\R^\times=T(F_\sigma).\end{array}\right.
%\]
By means of the morphisms \eqref{dualVP} and \eqref{PtoCoverC}, we think of them as an elements $\chi\in H^0(T(F),C^{0}(T(\A_F),\C)\otimes V(\underline{k}-2))$. Hence, we can consider the following cup product:
\[
\varphi\left(\chi\cup {\rm ES}_\lambda(\Phi)\right)\in H^u(T(F),C^0(T(\A_F),\C)).
\]
Notice that the Haar measure of $T(\A_F)$ provides a natural $T(F)$-invariant pairing
\[
C^0(T(\A_F),\C)\times C_c^0(T(\A_F),\Z)\longrightarrow \C \qquad
\langle f,\phi\rangle=\sum_{z\in T(F_\infty)/T(F_\infty)_+}\int_{S}\int_{T(\A_F^\infty)}f(zs,t_f)\phi(zs,t_f)d^\times t_f d^\times s,
\]
%where $S=\prod_{\sigma\mid\infty}T(F_\sigma)_+/T(F_\sigma)_0$. 
Hence, we can consider the cap product with the fundamental class $\eta$ of \eqref{fundclassdef} %defined in \S \ref{fundclass}
\begin{equation}\label{defcP}
    \cP({\rm ES}_\lambda(\Phi),\chi):=\varphi\left(\chi\cup {\rm ES}_\lambda(\Phi)\right)\cap\eta\in\C.
\end{equation}
\begin{theorem}\cite[theorem 3.10]{preprintsanti2}\label{mainTHM1}
Let $\chi:T(\A_F)/T(F)\rightarrow\C^\times$ be a locally polynomial character  such that  $\chi\mid_{T(F_\infty)}(t_\infty)=t_\infty^{\underline{m}}\chi_0(t_\infty)$, for some $\underline{m}=(m_\sigma)\in\Z^{\Sigma_F}$, with $\frac{2-\underline{k}}{2}\leq \underline{m}\leq\frac{\underline{k}-2}{2}$ and some locally constant character $\chi_0$.
Then we have that, for all $\Phi\in M_{\underline{k}}(U)$,
\[
\varphi\left(\chi\cup {\rm ES}_\lambda(\Phi)\right)\cap\eta=h\cdot \int_{T(\A_F)/T(F)}\Phi(\underline{\delta s}_\lambda(\mu_{\underline{m}}))(t)\cdot\chi(t)d^\times t,
\]
where $h=\#T(\cO_F)_{+,{\rm tors}}$ and $\mu_{\underline m}=\bigotimes_\sigma\mu_{m_\sigma}\in V(\underline{k}-2)$ is such that
\begin{equation*}\label{defmumglobal}
    \mu_{m_\sigma}\left(\left|\begin{array}{cc}X& Y\\  x&y \end{array}\right|^{k_\sigma-2}\right)=x^{\frac{k_\sigma-2}{2}-m_\sigma}y^{\frac{k_\sigma-2}{2}+m_\sigma}.%\quad\mbox{or simply}\quad \mu_{\underline m}\left(\left|\begin{array}{cc}X& Y\\  x&y \end{array}\right|^{\underline k-2}\right)=x^{\frac{\underline k-2}{2}-\underline m}y^{\frac{\underline k-2}{2}+\underline m}.
\end{equation*}
%\[
%\mu_{m_\sigma}\left(\left|\begin{array}{cc}X& Y\\  x&y \end{array}\right|^{k_\sigma-2}\right)=x^{\frac{k_\sigma-2}{2}-m_\sigma}y^{\frac{k_\sigma-2}{2}+m_\sigma}.%\qquad m_\sigma,k_\sigma\in\N^{[F_\sigma:\R]}.
%\].
\end{theorem}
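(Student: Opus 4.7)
The plan is to unfold the left-hand side of the formula by reducing the cohomological objects involved to their underlying integrals: first use Shapiro's lemma together with the factorization $\eta=1_\cF\cap\xi$ from Section \ref{fundclass} to decompose the cap product, then interpret the resulting pairing against $\xi$ via the derivation machinery that defines the Eichler-Shimura morphism ${\rm ES}_\lambda$, and finally recognize the outcome as the claimed integral over $T(\A_F)/T(F)$.

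Concretely, I would first rewrite $\varphi(\chi\cup{\rm ES}_\lambda(\Phi))\cap\eta$ using the identification $\Ind_{T(\cO_F)_+}^{T(F)}C(\cF,\Z)\simeq C_c^0(T(\A_F),\Z)$ and the fact that $\eta$ is the image, via this Shapiro isomorphism, of $1_\cF\cap\xi$. The pairing then splits into a summation over representatives $\bar t_i$ of ${\rm Cl}(T)_+$, integration of the finite adelic variable over $\cF$, and a remaining cap product against $\xi\in H_r(T(\cO_F)_+,\Z)$ on the archimedean side. The first two operations already reconstitute the finite-adelic part of $\int_{T(\A_F)/T(F)}$, so what remains is to handle the archimedean piece.

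The core computation is the cap against $\xi$. Under the Van Est comparison between group cohomology of $\Lambda\subset T(F_\infty)_0$ and Lie-algebra cohomology, $\xi$ is represented by the top exterior product of the Lie-algebra generators $\delta_\sigma^T=\exp_\sigma(1)$ appearing in \eqref{defdeltaT}. The cocycle representative of ${\rm ES}_\lambda(\Phi)$, produced from the exact sequence $0\to D(\underline{k})\to I(\lambda)\to V(\underline{k}-2)(\lambda)\to 0$ together with the unique $K_\infty$-equivariant section $s_\lambda$, is designed precisely so that applying $\delta_\sigma^T$ to $s_{\lambda_\sigma}$ yields the operator $\delta s_{\lambda_\sigma}$ of \eqref{defdeltaT}. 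Consequently, capping with $\xi$ converts a test vector $\mu\in V(\underline{k}-2)$ paired against ${\rm ES}_\lambda(\Phi)$ into the automorphic form $\Phi(\underline{\delta s}_\lambda(\mu))$. The character $\chi$, viewed via \eqref{dualVP} and \eqref{PtoCoverC} as an element of $C^0(T(\A_F),\C)\otimes V(\underline{k}-2)$, contributes both the multiplicative factor $\chi(t)$ in the integrand and the test vector $\mu_{\underline m}$, whose defining identity is built exactly so that \eqref{PtoCoverC} recovers $t_\infty^{\underline m}$.

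The remaining factor $h=\#T(\cO_F)_{+,{\rm tors}}$ enters because the corestriction $H_r(\Lambda,\Z)\to H_r(T(\cO_F)_+,\Z)$ used to view $\xi$ in $H_r(T(\cO_F)_+,\Z)$ contributes the index of $\Lambda$ in $T(\cO_F)_+$, which equals the order of the torsion subgroup. The main obstacle is the precise identification of the cap product with $\xi$ as the derivation $\underline{\delta s}_\lambda$; this rests on matching the Van Est cocycle representing $\xi$ with the Lie-algebra derivations used in the Harder-Eichler-Shimura construction of ${\rm ES}_\lambda$, and is essentially the compatibility statement from \cite{ESsanti} cited in the excerpt. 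Once this matching is in place, reassembling the finite-adelic integration from Step 1 with the archimedean pairing from Step 2 produces the full Tamagawa integral on the right-hand side.
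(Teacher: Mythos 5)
The paper does not actually prove this theorem; it is quoted directly from \cite[theorem~3.10]{preprintsanti2}, so there is no argument in the present text to compare your sketch against. That said, your outline follows the natural strategy and correctly identifies the three main moves one would expect: (1) unwinding $\eta$ through Shapiro and the factorization $\eta = 1_\cF\cap\xi$ so that the class-group sum over $\cF=\bigcup_i\bar t_iU$ reconstitutes the finite-adelic part of the Tamagawa integral; (2) using the Lie-algebra direction $\delta_\sigma^T=\exp_\sigma(1)$ from \eqref{defdeltaT} to interpret the archimedean cap against $\xi$ and so produce the operator $\underline{\delta s}_\lambda$; and (3) reading $\chi$ through \eqref{dualVP} and \eqref{PtoCoverC} as supplying both the scalar $\chi(t)$ in the integrand and the test vector $\mu_{\underline m}$ --- indeed, a direct check confirms that $\mu_{m_\sigma}\mapsto X^{\frac{k_\sigma-2}{2}-m_\sigma}Y^{\frac{k_\sigma-2}{2}+m_\sigma}$ under \eqref{dualVP} and that \eqref{PtoCoverC} then returns $(t_\sigma/\bar t_\sigma)^{m_\sigma}$.

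Two steps in your sketch are not yet justified and would need to be tightened to produce the exact constants. First, your account of the factor $h=\#T(\cO_F)_{+,\mathrm{tors}}$ is off: the corestriction $H_r(\Lambda,\Z)\to H_r(T(\cO_F)_+,\Z)$ referred to in \S\ref{fundclass} is the map induced by an inclusion (after choosing a splitting of $T(\cO_F)_+\to\Lambda$), and such a map does not carry a multiplicative factor equal to the index --- that is the transfer, which goes the other way. The appearance of $h$ has to be traced through the Shapiro identification $\Ind_{T(\cO_F)_+}^{T(F)}C(\cF,\Z)\simeq C_c^0(T(\A_F),\Z)$ and the fact that the torsion subgroup of $T(\cO_F)_+$ acts trivially on $C(\cF,\Z)$ and on the archimedean piece, so the cap-product sum overcounts by exactly $h$; simply invoking ``corestriction contributes the index'' would not survive a careful check. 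Second, when you compare the group-cohomological generator of $H_r(\Lambda,\Z)$ with the Lie-algebra frame $\{\delta_\sigma^T\}_\sigma$, a regulator-type determinant enters (the covolume of $\Lambda$ in $\R^r$); for the right-hand side to come out as the bare Tamagawa integral $\int_{T(\A_F)/T(F)}$ with no regulator, this determinant must cancel against the archimedean volume in $d^\times t$ as normalized in \S\ref{haarmeasures}. Your sketch does not address this cancellation, and without it one would expect a spurious regulator factor in the formula. Neither point overturns the overall strategy, but both need to be carried out to arrive at the stated constant.
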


\subsection{Admissible orders and test vectors}\label{classWalds}

Let $\pi$ be an automorphic representation of $G$ whose restriction to $G(F_\infty)$ is isomorphic to $D(\underline k)$. Let $\Pi$ be its Jacquet-Langlands lift to $\PGL_2$ and assume that $\Pi$ has conductor $N\subseteq\cO_F$. Recall that, for a fixed character $\lambda$, we can realize the representation $\pi^\infty=\pi\mid_{G(\A_F^\infty)}$ in the cohomology space  $H_\ast^{r}(G(F),\cA^{\infty}(V(\underline{k}-2))(\lambda))$ by means of the Harder-Eichler-Shimura morphism ${\rm ES}_\lambda$.
In this section we will introduce certain orders $\cO_N\subset B$ of discriminant $N$ and certain one dimensional spaces of test vectors $V(\pi^\infty,\chi)\subseteq H^{r}(G(F),\cA^{\infty}(V(\underline{k}-2)^U)(\lambda))$, where $U=\hat\cO_N^\times$. %The main aim of this paper will be to describe the period $\cP(\phi^\lambda,\chi):=\varphi\left(\chi\cup\phi^\lambda\right)\cap\eta\in\C$ for any $\phi^\lambda\in V(\pi^\infty,\chi)$.

Let us consider $c\subseteq\cO_F$ the conductor of the character $\chi$, namely, the biggest ideal such that $\chi$ is trivial on $\hat\cO_{c}^\times/\hat\cO_{F}^\times$, where $\cO_c\subset E$ is the order of conductor $c$. %We will make the following assumption 
%We define $S_1:=\{v\mid N \mbox{ nonsplit in }T;\;\ord_v(c)<\ord_v(N)\}$. Let $c_1:=\prod_{\mathfrak{p}\mid c,\mathfrak{p}\not\in S_1}\mathfrak{p}^{\ord_{\mathfrak{p}}c}$ be the $S_1$-off part of $c$. %$N_1$ the $S_1$-off part of $N$ and $N_2:=N/N_1$. 
We define $S_1:=\{v\mid N \mbox{ nonsplit in }T;\;\ord_v(c)<\ord_v(N)\}$. Let $c_1:=\prod_{\mathfrak{p}\mid c,\mathfrak{p}\not\in S_1}\mathfrak{p}^{\ord_{\mathfrak{p}}c}$ be the $S_1$-off part of $c$. Then, for any finite place $v$, there exists a $\cO_{F,v}$-order $\cO_{N,v}\subset B_v$ of discriminant $N\cO_{F,v}$ such that $\cO_{N,v}\cap E_v=\cO_{c_1,v}$. Such an order $\cO_{N,v}$ is called \emph{admissible for $(N,\chi_v)$} if at places $v\mid (N,c_1)$, the order $\cO_{N,v}$ is the intersection of two maximal orders $\cO_{B,v}'$ and $\cO_{B,v}''$ of $B_v$ such that 
    \[
    \cO_{B,v}'\cap E_v=\cO_{c,v},\qquad \cO_{B,v}''\cap E_v=\left\{\begin{array}{ll}\cO_{c/N,v},&\mbox{if }\ord_v(c/N)\geq 0\\\cO_{E,v},&\mbox{otherwise.}\end{array}\right.
    \]
    \begin{remark}
    Our concept of admissibility coincides with that of \cite{CST} because their
     condition (2) (see \cite[definition 1.3]{CST}) does not apply in our situation because $\chi_v\mid_{F_v^\times}$ is trivial.
\end{remark}
    %\item[ii)] If $0<\ord_v(c_1)<\ord_v(N)$, then $E_v$ must split by the definition of $S_1$. Then there are two $E_v^\times$-conjugacy classes of order satisfying i), which are conjugate to each other by a normalizer of $T(F_v)$ in $G(F_v)$. Fix $F_v$-algebra isomorphisms $E_v\simeq F_v^2$ and $B_v\simeq \End_{F_v}(E_v)$. Then the two classes contain, respectively, the orders $\cO_1=\End_{\cO_{F,v}}(\cO_{c,v})\cap\End_{\cO_{F,v}}((\varpi_v^{n},1)\cO_{E,v})$ and $\cO_2=\End_{\cO_{F,v}}(\cO_{c,v})\cap\End_{\cO_{F,v}}((1,\varpi_v^{n})\cO_{E,v})$, where $n=\ord_v(N/c)$. Then we impose that $\cO_{N,v}$ is $E_v^\times$-conjugate to some $\cO_m$ such that $\chi_m$ has conductor $\ord_v(c)$, where $\chi_m$ (for $m=1,2$) are defined by $\chi_1(a)=\chi_v(a,1)$ and $\chi_2(a)=\chi_v(1,a)$.
%\end{itemize}
%Let $N$ be the conductor of $\pi$. 
Let $\cO_N\subset B$ be an admissible $\cO_F$-order for $(N,\chi)$, in the sense that, for any finite place $v$ the order $\cO_{N,v}$ is admissible for $(N,\chi_v)$. It follows that $\cO_N$ is an $\cO_F$-order of discriminant $N$ such that $\cO_N\cap E=\cO_{c_1}$.

%\textcolor{red}{No hay definicion de $\cO_{F,v}$, $\ord_v(\cdot)$, $\varpi_v$} 
\begin{definition}\label{defV1}
Write $U^{S_1}=\prod_{v\not\in S_1}\cO_{N,v}^\times$.
Let $V(\pi^\infty,\chi)\subset \pi^\infty$ be the subspace of elements $\bigotimes'_vf_v\in \pi^{U^{S_1}}$  such that
%\begin{itemize}
    %\item 
$f_v$ is a $\chi_v^{-1}$-eigenform under $T(F_v)$, for all places $v\in S_1$.
    %
    %\item $f_\sigma$ is a $\chi_\sigma^{-1}$-eigenform under the maximal compact subgroup of $T(F_\sigma)$ with weight minimal, for all infinite places $\sigma\in\Sigma_F$
%\end{itemize} 
\end{definition}
%Let $\eta_{T,v}$ be the quadratic character associated with $E_v/F_v$. 
If we assume that the local root number $\epsilon(1/2,\pi_v,\chi_v)=\chi_v\eta_{T,v}(-1)\epsilon(B_v)$ for all $v\nmid\infty$, %where $\epsilon(B_v)=1$ if $B_v$ is a matrix algebra and $\epsilon(B_v)=-1$ otherwise, 
then the space $V(\pi^\infty,\chi)$ is actually one-dimensional by \cite[proposition 3.7]{CST}. 

An Eichler order in $B$ is the intersection of two maximal orders. In general, the order $\cO_N$ is not an Eichler order. There may be finitely many places $v$ where $\cO_{N,v}=\cO_N\otimes_{\cO_{F}}\cO_{F,v}$ is not an Eichler order (see remark \ref{remEO}). In case that an admissible order $\cO_N$ for $(N,\chi)$ turns out to be an Eichler order, we will say that $\cO_N$ is an \emph{admissible Eichler order}.

\subsection{Normalized newvectors}\label{normnewvec}

In this section we will assume that $G=\PGL_2$ and $\pi=\Pi$. We will define a normalized element $\Psi\in M_{\underline k}(U_0(N))$ generating $\Pi$, where $U_0(N)$ is the usual compact and open subgroup 
\[
U_0(N)=\left\{\big(\begin{smallmatrix} a&b\\c&d\end{smallmatrix}\big)\in \GL_2(\hat\cO_F),\;c\in N\hat\cO_F\right\}.
\]
Let $\Psi\in M_{\underline k}(U_0(N))$ be the form generating $\Pi$, normalized so that 
\[
\Lambda(s,\Pi)=|d_F|^{s-1/2}\int_{\A_F^\times/F^\times}\Psi(\underline {\delta s}(\mu_{\underline 0}))\left(\begin{array}{cc}a&\\&1\end{array}\right)|a|^{s-1/2}d^\times a,
\] 
where $|\cdot|:\A_F^\times\rightarrow\R_+$ is the standard adelic absolute value, $d_F\subset\cO_F$ is the different of $F$ and $\Lambda(s,\Pi)$ is the (completed) global L-function associated with $\Pi$. 
As pointed out in \cite[\S 3.2]{preprintsanti2}, in case $F$ totally real $\Psi$ corresponds to the normalized Hilbert newform under the natural identification between $M_{\underline k}(U_0(N))$ and the space of Hilbert modular forms.

\subsection{Squares of toric period integrals in higher cohomology}

%As in previous sections, we fix $\lambda:G(F)/G(F)_+\rightarrow\pm 1$. Let $\pi$ be an automorphic representation of $G$ of even weight $\underline{k}$ (namely, $\pi_\infty\simeq D(\underline{k})$) and conductor $N$.
%Thanks to the Eichler-Shimura morphism ${\rm ES}_\lambda$ we can realize $\pi^\infty=\pi\mid_{G(\A_F^\infty)}$
%in the space $H_\ast^r(G(F),\cA^\infty(V(\underline{k}-2))(\lambda))$. 
Given a locally polynomial character $\chi$ of degree at most $\frac{\underline{k}-2}{2}$, we can consider 
\[
\cP(\cdot,\chi):\pi^\infty\subset H_\ast^r(G(F),\cA^\infty(V(\underline{k}-2))(\lambda))\longrightarrow\C,\qquad\cP(\phi_\lambda,\chi):=\varphi\left(\chi\cup\phi^\lambda\right)\cap\eta\in\C,
\]
as in \eqref{defcP}.
The complex number $\cP(\phi^\lambda,\chi)$ can be seen as an analogy in higher cohomology of the toric periods $\int_{T(\A_F)/T(F)}f(t)\cdot\chi(t)d^\times t$ of an automorphic form $f$. In fact, theorem \ref{mainTHM1} relate these two concepts. Given $\pi$ and $\chi$, we choose an admissible order $\cO_N$ for $(N,\chi)$ and write $U=\hat\cO_N^\times$. %Write $L^S(1/2,\Pi,\chi)$ for the L-function with the local factors at places of $S\cup\infty$ removed and $L_{c_1}(1,\eta_{T})$ for the product of local L-functions at places $v\mid c_1$. 
The following result can be seen as a higher cohomological analogy of the classical Waldspurger formula: 
\begin{theorem}[\cite{preprintsanti2}]\label{THMwaldsHC1}
%Let $\pi$ be an automorphic representation for $G$ of even weight $\underline{k}$, and write $\Pi$ its Jacquet-Langlands lift to $\GL_2/F$. 
Let $\chi:T(\A_F)/T(F)\rightarrow\C^\times$ be a locally polynomial character  such that  $\chi\mid_{T(F_\infty)}(t_\infty)=t_\infty^{\underline{m}}\chi_0(t_\infty)$, for some $\underline{m}=(m_{\tilde\sigma})\in\Z^{\Sigma_F}$, with $\frac{2-\underline{k}}{2}\leq \underline{m}\leq\frac{\underline{k}-2}{2}$ and some locally constant character $\chi_0$.  
Then we have $\cP(\cdot,\chi)=0$ unless $\chi_{0}=\lambda$ and the root number $\epsilon(1/2,\pi_v,\chi_v)=\chi_v\eta_{T,v}(-1)\epsilon(B_v)$, for all $v\nmid\infty$. Moreover in that case, for any pair $\phi_{01}^\lambda\in V(\pi^\infty,\chi)$ and $\phi_{02}^\lambda\in V(\pi^\infty,\chi^{-1})$,
\[
\cP(\phi_{01}^\lambda,\chi)\cdot \cP(\phi_{02}^\lambda,\chi^{-1})=\frac{2^{\#S_D}L_{c_1}(1,\eta_{T})^2h^2
C(\underline k,\underline m)}{|c_1^2 D|^{\frac{1}{2}}}\cdot L^S(1/2,\Pi,\chi)\cdot\frac{\langle \Phi_{01},\Phi_{02}\rangle}{\langle \Psi,\Psi\rangle}\cdot\frac{{\rm vol}(U_0(N))}{{\rm vol}(U)},
\]
where $S:=\{v\mid (N,Dc);\mbox{ if }v\parallel N\mbox{ then }\ord_v(c/N)\geq 0\}$, $S_D:=\{v\mid (N,D);\;\ord_v(c)<\ord_v(N)\}$, $\Phi_{0i}\in M_{\underline k}(U)$ are such that ${\rm ES}_\lambda(\Phi_{0i})=\phi_{0i}^\lambda$ and
    \[
    C(\underline k,\underline m)=(-1)^{\left(\sum_{\sigma\not\in\Sigma_B}\frac{k_{\sigma}-2}{2}\right)}4^{\#\Sigma_T^\R}\cdot(32\pi)^{\#\Sigma_T^\C}\left(\frac{1}{\pi}\right)^{\#(\Sigma_F\setminus\Sigma_B)}\prod_{\sigma\mid\infty}\prod_{\tilde\sigma\mid\sigma}\frac{\Gamma(\frac{k_{\tilde\sigma}}{2}-m_{\tilde\sigma})\Gamma(\frac{k_{\tilde\sigma}}{2}+m_{\tilde\sigma})}{(-1)^{m_{\tilde\sigma}}.(2\pi)^{k_{\tilde\sigma}}}.%\prod_{F_\sigma=\C}\frac{\Gamma(k_{\sigma_1}-m_{\sigma_1})\Gamma(k_{\sigma_1}+m_{\sigma_1})\Gamma(k_{\sigma_2}-m_{\sigma_2})\Gamma(k_{\sigma_2}+m_{\sigma_2})}{(2\pi)^{k_{\sigma_1}+k_{\sigma_2}}}
    \]
\end{theorem}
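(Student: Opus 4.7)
The plan is to obtain Theorem \ref{mainTHMintro} by specializing the paper's general squared-period formula (Theorem \ref{mainTHM}, stated and proved in the body) to the hypotheses of Assumption \ref{assuNc}. First, the vanishing criterion---$\cP(\cdot,\chi)\equiv 0$ unless $\chi_0=\lambda$ and the local root-number condition $\epsilon(1/2,\pi_v,\chi_v)=\chi_v\eta_{T,v}(-1)\epsilon(B_v)$ holds at every finite $v$---follows directly from the triviality of the test-vector space $V(\pi^\infty,\chi)$ in the failed cases, already exploited in Theorem \ref{THMwaldsHC1}.

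Next, under Assumption \ref{assuNc}, Lemma \ref{lemmaadmiorder} ensures that the admissible order $\cO_N$ is an Eichler order and that $V(\pi^\infty,\chi)=(\pi^\infty)^U$ is one-dimensional. A direct case analysis on places $v\mid N$ (distinguishing $\ord_v(c)\geq\ord_v(N)$ from $\ord_v(c)=0$) shows that the sets $S$, $S_D$ and the conductor $c_1=c$ appearing in the general formula reduce to the simpler ones listed in the statement, and that the factor $L_{c_1}(1,\eta_T)$ collapses to $L_c(1,\eta_T)$. Fix the $U$-invariant vector $\phi_0^\lambda$ spanning $(\pi^\infty)^U$; the Atkin--Lehner involution $w_{S^D}$ acts on it by the scalar $\varepsilon(S^D)$.

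The core of the deduction consists in rewriting the $J_0$-dependent factors of the general formula. The decomposition $k_0^{-1}J_0\in G(F_\infty)_+\times w_{S^D}U$ factors the normalizer $J_0\in G(F)$ of $E\subset B$, seen adelically, as an archimedean identity-component piece $g_\infty$, the Atkin--Lehner $w_{S^D}$, and a level element in $U$, all twisted by the torus adele $k_0\in T(\A_F)$. Tracking this decomposition through the cup-product definition of $\cP$ together with the $\chi$- and $\lambda$-equivariances produces the numerator $\varepsilon(S^D)$ and the denominator $\lambda(k_0)\chi_f^{-1}(k_0)M^{-\underline m}$. The archimedean sign refinement from $C(\underline k,\underline m)$ to $\bar C(\underline k,\underline m)$---the overall sign $(-1)^{\sum k_{\tilde\sigma}/2}$, the replacement of $(32\pi)^{\#\Sigma_T^\C}$ by $(-32\pi)^{r_\C}$, and the removal of the $(-1)^{m_{\tilde\sigma}}$---arises from how the weight vector $\underline{\delta s}_\lambda(\mu_{\underline m})\in D(\underline k)$ transforms under right translation by $g_\infty$ at each place of $\Sigma_B$.

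Finally, to pass from $\phi_0^\lambda$ to a general decomposable $\phi^\lambda$ differing at a finite set $\mathfrak{S}$, the local matrix-coefficient integrals $\beta_v(\phi_v^\lambda,J_0)$ appearing at every finite $v$ in the general formula satisfy $\beta_v(\phi_v^\lambda,J_0)/\beta_v(\phi_{0,v}^\lambda,J_0)=1$ for $v\notin\mathfrak{S}$, leaving precisely the product over $\mathfrak{S}$ as stated. The main obstacle I foresee is the explicit archimedean bookkeeping: carefully tracking how $\mu_{\underline m}$ and its lift $\underline{\delta s}_\lambda(\mu_{\underline m})$ transform under $g_\infty$ at each archimedean place of $\Sigma_B$, so as to produce both the sign refinement $C\to\bar C$ and the multiplicative factor $M^{-\underline m}$ coming from $J_0^2=M$.
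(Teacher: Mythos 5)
Your proposal proves the wrong statement. What is asked for is theorem \ref{THMwaldsHC1}: the Waldspurger-type identity for the \emph{product} $\cP(\phi_{01}^\lambda,\chi)\cdot\cP(\phi_{02}^\lambda,\chi^{-1})$ of periods of test vectors $\phi_{01}^\lambda\in V(\pi^\infty,\chi)$, $\phi_{02}^\lambda\in V(\pi^\infty,\chi^{-1})$, with the constant $C(\underline k,\underline m)$, the sets $S$, $S_D$ and the conductor $c_1$ as defined there. What you outline instead is the deduction of theorem \ref{mainTHMintro} (the single square $\cP(\phi^\lambda,\chi)^2$, with $\bar C(\underline k,\underline m)$, $\varepsilon(S^D)$ and the denominator $\lambda(k_0)\chi_f^{-1}(k_0)M^{-\underline m}$) from theorem \ref{mainTHM} under assumption \ref{assuNc}, i.e.\ you reproduce, in sketch form, steps the paper itself carries out after theorem \ref{THMwaldsHC1}. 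This cannot stand as a proof of the stated theorem, and not only because of the mismatch of targets: within this paper the logical flow runs in the opposite direction. Theorem \ref{THMwaldsHC1} is imported from \cite{preprintsanti2} and is the input to corollary \ref{coroWFHC}, which in turn (together with lemma \ref{lemmaadmiorder} and the one-dimensionality of $\Hom_{T(F_v)}(\pi_v\otimes\chi_v,\C)\otimes\Hom_{T(F_v)}(\pi_v\otimes\chi_v^{-1},\C)$) yields theorem \ref{mainTHM} and hence theorem \ref{mainTHMintro}; deriving theorem \ref{THMwaldsHC1} from those downstream results would be circular.

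Independently of the circularity, your outline contains none of the ingredients that the statement actually requires: the passage from the cap product $\varphi(\chi\cup{\rm ES}_\lambda(\Phi))\cap\eta$ to the adelic toric integral (theorem \ref{mainTHM1}), the application of Waldspurger's formula together with the local test-vector computations of \cite{CST} at the finite places (which produce the factors $\frac{{\rm vol}(U_0(N))}{{\rm vol}(U)}$, $2^{\#S_D}$, $L_{c_1}(1,\eta_T)^2$, $|c_1^2D|^{-1/2}$ and the removal of the Euler factors at $S$), the explicit archimedean local integrals giving $C(\underline k,\underline m)$, and the comparison with $\langle\Psi,\Psi\rangle$ via the normalized newvector of \S\ref{normnewvec}. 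The vanishing criterion is likewise not a consequence of ``triviality of the test-vector space''; it rests on the Saito--Tunnell dichotomy for the local spaces $\Hom_{T(F_v)}(\pi_v\otimes\chi_v,\C)$. As written, the proposal cannot be repaired into a proof of theorem \ref{THMwaldsHC1} without importing essentially the whole argument of \cite{preprintsanti2}.
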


For some applications it is interesting to compute $\cP(\phi^\lambda,\chi)^2$ for some $\phi^\lambda\in\pi^\infty\subset H_\ast^r(G(F),\cA^\infty(V(\underline{k}-2))(\lambda))$. In order to do that, we consider an element $J\in G(F)\setminus T(F)$ in the normalizer of $T(F)$. In fact, we can choose $J\in G(F)_+$ since $G(F)/G(F)_+\simeq T(F)/T(F)_+$ and we can replace $J$ by $Jt$ for any $t\in T(F)$.
It is easy to check that $J$ satisfies $J t= t^{-1} J$, for all $t\in T(F)$, and $J^2=1$. If ${\rm ES}_\lambda(\Phi)=\phi^\lambda$ then we have by theorem \ref{mainTHM1} and the $G(F)$-invariance of $\Phi(\underline{\delta s}(\mu_{-\underline m}))$:
\begin{eqnarray*}\label{fconJ}
\frac{\cP(\pi^\infty(J)\phi^\lambda,\chi^{-1})}{h}%&=&\int_{T(\A_F)/T(F)}\chi^{-1}(\tau)(\pi^\infty(J)\Phi)(\underline{\delta s}_\lambda(\mu_{-\underline{m}}))(\tau)d^\times \tau\\
&=&\int_{T(\A_F)/T(F)}\chi(z,t)^{-1}\Phi(\underline{\delta s}_\lambda(\mu_{-\underline{m}}))(z,tJ)d^\times zd^\times t\\
&=&\int_{T(\A_F)/T(F)}\chi(z,t)\Phi(\underline{\delta s}_\lambda(\mu_{-\underline{m}}))(JzJ,Jt)d^\times zd^\times t\\
&=&\int_{T(\A_F)/T(F)}\chi(z,t)\Phi(J_1\underline{\delta s}_\lambda(\mu_{-\underline{m}}))(zJ_2,t)d^\times zd^\times t\\
&=&\chi^{-1}(J_2)\int_{T(\A_F)/T(F)}\chi(\tau)\Phi(J_1\underline{\delta s}_\lambda(\mu_{-\underline{m}}))(\tau)d^\times \tau,
\end{eqnarray*}
where $J=J_2J_1\in G(F_\infty)$ with $J_1=(J_{1,\sigma})_\sigma\in K_\infty$ and $J_2=(J_{2,\sigma})_\sigma\in T(F_\infty)_+$, corresponding under the morphism $G(F_{\sigma})\stackrel{\imath_{\tilde\sigma}}{\hookrightarrow}\PGL_2(\C)$ induced by \eqref{embEinB} to
\begin{eqnarray*}
    \imath_{\tilde\sigma}(J_{1,\sigma})_\sigma=w=\left(\begin{array}{cc}
     &-1  \\
  1   & 
\end{array}\right),\quad \imath_{\tilde\sigma}(J_{2,\sigma})=\left(\begin{array}{cc}
   \tilde \sigma M_J  &  \\
     & 1 
\end{array}\right),&&\sigma\in\Sigma_B,\qquad\tilde\sigma\mid\sigma;\\ 
J=J_{1,\sigma},\qquad \imath_{\tilde\sigma}(J_1)_\sigma=\left(\begin{array}{cc}
     &-\tilde\sigma M_J  \\
  1   & 
\end{array}\right),&&\sigma\not\in\Sigma_B,\qquad\tilde\sigma\mid\sigma.
\end{eqnarray*}
for some fixed $M_J\in F^\times$. Notice that $w\imath(t)=\imath(t^{-1})w$, and this implies that $w\delta_\sigma^T=-\delta_\sigma^T w$ for all $\sigma\in\Sigma_B$. From the description of $\underline{\delta s}_\lambda$ given in \eqref{defdeltaT}, we deduce that $J_1\underline{\delta s}_\lambda(\mu_{-m_\sigma})=-\underline{\delta s}_\lambda(J_1\mu_{-m_\sigma})$. Moreover, it is easy to compute that $\big(\begin{smallmatrix}
    &-M_J\\1&
\end{smallmatrix}\big)\mu_{-m_\sigma}=M_J^{-m_\sigma}(-1)^{\frac{k_\sigma-2}{2}+m_\sigma}\mu_{m_\sigma}$. Hence, we obtain
\[
\cP(\pi^\infty(J)\phi^\lambda,\chi^{-1})=M_J^{-\underline m}(-1)^{\frac{\underline k-2}{2}+\underline m}(-1)^{\#\Sigma_B}h\int_{T(\A_F)/T(F)}\chi(\tau)\Phi(\underline{\delta s}(\mu_{\underline{m}}))(\tau)d^\times \tau=M_J^{-\underline m}(-1)^{\frac{\underline k-2}{2}+\underline m}(-1)^{r}\cP(\phi^\lambda,\chi).
\]
As in theorem \ref{THMwaldsHC1}, fix non-zero elements $\phi_{01}^\lambda\in V(\pi^\infty,\chi)$ and $\phi_{02}^\lambda\in V(\pi^\infty,\chi^{-1})$, respectively. Then there exists a finite set of nonarchimedean places $\mathfrak{S}_0$ such that $\pi_v(J)\phi_{01,v}^\lambda\not\in V(\pi_v,\chi_v^{-1})$. %The following result is a direct consequence of theorem \ref{THMwaldsHC1}:
\begin{corollary}\label{coroWFHC}
Let $\chi:T(\A_F)/T(F)\rightarrow\C^\times$ be a locally polynomial character  such that  $\chi\mid_{T(F_\infty)}(t_\infty)=t_\infty^{\underline{m}}\lambda(t_\infty)$, for some $\underline{m}\in\Z^{\Sigma_F}$, with $\frac{2-\underline{k}}{2}\leq \underline{m}\leq\frac{\underline{k}-2}{2}$.
If the local root number $\epsilon(1/2,\pi_v,\chi_v)=\chi_v\eta_{T,v}(-1)\epsilon(B_v)$ for all $v\nmid\infty$, then
    \[
    \cP(\phi_{01}^\lambda,\chi)^2=\frac{2^{\#S_D}L_{c_1}(1,\eta_{T})^2h^2
\bar C(\underline k,\underline m)}{M_J^{-\underline m}|c_1^2 D|^{\frac{1}{2}}}\cdot L^S(1/2,\Pi,\chi)\cdot\frac{\langle \Phi_{01},\pi(J)\Phi_{01}\rangle}{\langle \Psi,\Psi\rangle}\cdot\frac{{\rm vol}(U_0(N))}{{\rm vol}(U)}\prod_{v\in \mathfrak{S}_0}\frac{\beta_{v}(\phi_{01,v}^\lambda,\pi(J)\phi_{01,v}^\lambda)}{\beta_{v}(\phi_{01,v}^\lambda,\phi_{02,v}^\lambda)},
    \]
    where $\Phi_{01}\in M_{\underline k}(U)$ is such that ${\rm ES}_\lambda(\Phi_{01})=\phi_{01}^\lambda$, and
    \begin{eqnarray*}
        \bar C(\underline k,\underline m)&=&(-1)^{\left(\sum_{\sigma\in\Sigma_B,\tilde\sigma\mid\sigma}\frac{k_{\tilde\sigma}}{2}\right)}4^{r_\R}\cdot(-32\pi)^{r_\C}\left(\frac{1}{\pi}\right)^{d-r-r_\C}\prod_{\sigma\mid\infty}\prod_{\tilde\sigma\mid\sigma}\frac{\Gamma(\frac{k_{\tilde\sigma}}{2}-m_{\tilde\sigma})\Gamma(\frac{k_{\tilde\sigma}}{2}+m_{\tilde\sigma})}{(2\pi)^{k_{\tilde\sigma}}},\\
        \beta_v(f_{1,v},f_{2,v})&=&\int_{T(F_v)}\chi_v(t_v)\frac{\langle\pi_v(t_v)\phi^\lambda_{1,v},\phi^\lambda_{2,v}\rangle_v}{\langle \phi^\lambda_{1,v},\phi^\lambda_{2,v}\rangle_v} d^\times t_v,
    \end{eqnarray*}
being $\langle\;,\;\rangle_v$ any $G(F_v)$-invariant bilinear inner product.
\end{corollary}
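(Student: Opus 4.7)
The plan is to reduce $\cP(\phi_{01}^\lambda,\chi)^2$ to the mixed product computed in Theorem \ref{THMwaldsHC1}, by means of the $J$-identity derived just before the statement. Applying
$$\cP(\pi^\infty(J)\phi^\lambda,\chi^{-1})=M_J^{-\underline m}(-1)^{\frac{\underline k-2}{2}+\underline m}(-1)^{r}\cP(\phi^\lambda,\chi)$$
to $\phi^\lambda=\phi_{01}^\lambda$ and multiplying both sides by $\cP(\phi_{01}^\lambda,\chi)$ gives
$$\cP(\phi_{01}^\lambda,\chi)^{2}=M_J^{\underline m}(-1)^{\frac{\underline k-2}{2}+\underline m}(-1)^{r}\cdot\cP(\phi_{01}^\lambda,\chi)\cdot\cP(\pi(J)\phi_{01}^\lambda,\chi^{-1}),$$
rewriting the square as a mixed product with the same shape as the left-hand side of Theorem \ref{THMwaldsHC1}, but with $\phi_{02}^\lambda$ replaced by $\pi(J)\phi_{01}^\lambda$.

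The second step is to extend Theorem \ref{THMwaldsHC1} to the non-test-vector pair $(\phi_{01}^\lambda,\pi(J)\phi_{01}^\lambda)$. A standard Ichino-type factorisation---based on the uniqueness (under the root number hypothesis) of the local $T(F_v)$-equivariant bilinear pairing on $\pi_v\otimes\pi_v$ against $\chi_v\boxtimes\chi_v^{-1}$, which is computed by the matrix coefficient integral $\beta_v$---yields
\begin{equation*}
\cP(\phi_{01}^\lambda,\chi)\cdot\cP(\pi(J)\phi_{01}^\lambda,\chi^{-1})=\cP(\phi_{01}^\lambda,\chi)\cdot\cP(\phi_{02}^\lambda,\chi^{-1})\cdot\frac{\langle\Phi_{01},\pi(J)\Phi_{01}\rangle}{\langle\Phi_{01},\Phi_{02}\rangle}\prod_{v\in\mathfrak{S}_0}\frac{\beta_v(\phi_{01,v}^\lambda,\pi(J)\phi_{01,v}^\lambda)}{\beta_v(\phi_{01,v}^\lambda,\phi_{02,v}^\lambda)}.
\end{equation*}
Outside $\mathfrak{S}_0$ both $\pi_v(J)\phi_{01,v}^\lambda$ and $\phi_{02,v}^\lambda$ span the one-dimensional line $V(\pi_v,\chi_v^{-1})$, so scaling cancels in the associated $\beta_v$-ratio and only the finite set $\mathfrak{S}_0$ contributes nontrivially.

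Substituting Theorem \ref{THMwaldsHC1} for the factor $\cP(\phi_{01}^\lambda,\chi)\cdot\cP(\phi_{02}^\lambda,\chi^{-1})$ and telescoping the global inner products converts $\langle\Phi_{01},\Phi_{02}\rangle$ into $\langle\Phi_{01},\pi(J)\Phi_{01}\rangle$, so that the corollary emerges once the prefactor $M_J^{\underline m}(-1)^{\frac{\underline k-2}{2}+\underline m}(-1)^{r}$ is shown to turn $C(\underline k,\underline m)$ into $\bar C(\underline k,\underline m)$. The $M_J^{\underline m}$ matches the $M_J^{-\underline m}$ in the denominator of the claimed formula; the $(-1)^{\underline m}$ absorbs the $\prod(-1)^{m_{\tilde\sigma}}$ appearing in the denominator of $C$ but absent in $\bar C$; and the remaining signs and the shift of the power of $\pi$ are accounted for by the archimedean behaviour of the $J$-identity at the complex places of $\Sigma_T^\C$ together with the sign $(-1)^{r}$. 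The main obstacle is the factorisation step: one needs to unpack the local-to-global decomposition already implicit in the proof of Theorem \ref{THMwaldsHC1} in \cite{preprintsanti2} to check that it applies to general decomposable vectors and yields precisely the matrix coefficient integrals $\beta_v$. Once this is granted, the remainder reduces to straightforward sign and constant bookkeeping.
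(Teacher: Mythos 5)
Your proposal is correct and takes essentially the same route as the paper. The paper's own proof of the corollary is terse --- it simply asserts that the result ``follows directly from Theorem \ref{THMwaldsHC1} plus the fact that both $\cP(\cdot,\chi)\cdot\cP(\cdot,\chi^{-1})$ and $\int_{T(F_v)}\chi_v(t_v)\langle\pi_v(t_v)\cdot,\cdot\rangle_v\,d^\times t_v$ lie in the one-dimensional space $\Hom_{T(F_v)}(\pi_v\otimes\chi_v,\C)\otimes\Hom_{T(F_v)}(\pi_v\otimes\chi_v^{-1},\C)$'' --- and tacitly relies on the $J$-identity that is derived in the paragraph immediately preceding the statement. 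You supply exactly the same ingredients: multiply the $J$-identity through by $\cP(\phi_{01}^\lambda,\chi)$ to convert the square into the mixed period $\cP(\phi_{01}^\lambda,\chi)\cdot\cP(\pi(J)\phi_{01}^\lambda,\chi^{-1})$, use the one-dimensionality of the local Saito--Tunnell spaces (together with the inner-product normalisation of $\beta_v$, which absorbs $\langle\Phi_{01},\Phi_{02}\rangle\mapsto\langle\Phi_{01},\pi(J)\Phi_{01}\rangle$) to replace $\phi_{02}$ by $\pi(J)\phi_{01}$ at the cost of the $\beta_v$-ratios over $\mathfrak{S}_0$, and then push the prefactor $M_J^{\underline m}(-1)^{\frac{\underline k-2}{2}+\underline m}(-1)^r$ into $C(\underline k,\underline m)$ to produce $\bar C(\underline k,\underline m)$ (which, using $\#(\Sigma_F\setminus\Sigma_B)=d-r-r_\C$ and $(-1)^r=(-1)^{r_\R}(-1)^{r_\C}$, indeed checks out). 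Your write-up is in fact more explicit about the bookkeeping than the paper's one-line justification, but the underlying argument is identical.
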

\begin{proof}
This result follows directly from theorem \ref{THMwaldsHC1} plus the fact that both
    \[
    \cP(\cdot,\chi)\cdot \cP(\cdot,\chi^{-1}),\int_{T(F_v)}\chi_v(t_v)\langle\pi_v(t_v)\phi^\lambda_{1,v},\phi^\lambda_{2,v}\rangle_vd^\times t_v\in \Hom_{T(F_v)}(\pi_v\otimes\chi_v,\C)\otimes\Hom_{T(F_v)}(\pi_v\otimes\chi_v^{-1},\C),
    \]
    and $\dim_\C\left(\Hom_{T(F_v)}(\pi_v\otimes\chi_v,\C)\otimes\Hom_{T(F_v)}(\pi_v\otimes\chi_v^{-1},\C)\right)=1$
\end{proof}

Let us analyze the set of primes $\mathfrak{S}_0$: If $v\in S_1$ then it is clear %by the definition 
that $\pi_v(J)V(\pi_v,\chi_v)=V(\pi_v,\chi_v^{-1})$, and so $v\not\in \mathfrak{S}_0$. Thus, assume that $v\not\in S_1$. In this case $V(\pi_v,\chi_v)=V(\pi_v,\chi_v^{-1})=\pi_v^{U_v}$ where $U_v=\cO_{N,v}^\times$ and $\cO_{N,v}$ is admissible for $(N,\chi_v)$. If $v$ splits in $T$ and $0<\ord_v(c)<\ord_v(N)$ then, by \cite[lemma 3.3]{CST}, there are two $T(F_v)$-conjugacy classes of $(N,\chi_v)$-admissible orders and $J$ interchanges them. This implies that $\pi_v(J)\pi_v^{U_v}\neq \pi_v^{U_v}$ and $v\in \mathfrak{S}_0$. Finally, if $v$ is such that $\ord_v(c)\geq \ord_v(N)$ or $\ord_v(c)=0$ then, again by \cite[lemma 3.3]{CST}, the admissible order $\cO_{N,v}$ is unique up to $T(F_v)$-conjugation. Since $J^{-1}\cO_{N,v}J$ is also admissible, there exists $k_{J,v}\in T(F_v)$ such that $J=k_{J,v}w_{J,v}$ where $w_{J,v}$ is in the normalizer of $\cO_{N,v}$. Thus $v\in \mathfrak{S}_0$ if and only if $k_{J,v}\not\in \cO_{c,v}^\times$.

The above description of $\mathfrak{S}_0$ indicates that, under certain hypothesis that excludes the case $v$ splits in $T$ and $0<\ord_v(c)<\ord_v(N)$, we can explicitly compute the local terms of corollary \ref{coroWFHC}.
\begin{lemma}\label{lemmaadmiorder}
    Assume that for all $v\nmid\infty$:
    \begin{itemize}
    \item [(i)]Either $\ord_v(c)\geq \ord_v(N)$ or $\ord_v(c)=0$. 
    \item [(ii)]If $\ord_v(c)=0$ and $T(F_v)$ is non-split then $\ord_v(N)\leq 1$.
    \item [(iii)]The local root number $\epsilon(1/2,\pi_v,\chi_v)=\chi_v\eta_{T,v}(-1)\epsilon(B_v)$. 
    \end{itemize}
    Then $\cO_N$ is an Eichler order and $J=k_Jw_J$, where $k_J\in T(\A_F^\infty)$ and $w_J\in w_{S^D}\hat\cO_{N}^\times$, being $w_{S^D}=\prod_{v\in S^D}w_v$ the Atkin-Lehner involution with $S^D=\{v\mid N:\;{\rm ord}_v(c)=0;\;v\nmid D\}$.
\end{lemma}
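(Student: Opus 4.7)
The plan is to argue locally at each finite place $v$. First, from hypothesis (i) the set $S_1$ reduces to $\{v\mid N:\,T(F_v)\text{ nonsplit},\,\ord_v(c)=0\}$ (the alternative $0<\ord_v(c)<\ord_v(N)$ is ruled out), and (ii) then gives $\ord_v(N)=1$ for every $v\in S_1$. In particular $c_1$ is trivial at places of $S_1$, so $\cO_{c_1,v}=\cO_{E,v}$ there.

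With this structural information, Eichler-ness of $\cO_{N,v}$ follows by cases. At $v\nmid N$, $\cO_{N,v}$ is maximal. At $v\mid(N,c_1)$, admissibility defines $\cO_{N,v}$ as an intersection of two maximal orders, hence Eichler. The remaining case is $v\mid N$ with $\ord_v(c)=0$: either $T(F_v)$ is split, which forces $B_v\simeq \M_2(F_v)$ because $E_v$ embeds in $B_v$, and then the discriminant-$\varpi_v^{\ord_v(N)}$ order containing the split torus is the standard Eichler order; or $v\in S_1$, whereby $\ord_v(N)=1$ and any order of discriminant $\varpi_v$ in $B_v$ is automatically Eichler (the unique maximal order of the division algebra when $v\mid D$, the standard level-$\varpi_v$ Eichler order when $v\nmid D$).

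For the decomposition, I would exploit the uniqueness of admissible orders up to $T(F_v)$-conjugacy. As recalled in the paragraph preceding the lemma, \cite[lemma 3.3]{CST} supplies this uniqueness whenever the split-$T$ case with $0<\ord_v(c)<\ord_v(N)$ is excluded, and hypothesis (i) ensures the exclusion at every $v$. Since $J$ normalizes $E$ and the Galois automorphism fixes each of $\cO_{c,v},\cO_{c_1,v},\cO_{c/N,v},\cO_{E,v}$, the conjugate $J\cO_{N,v}J^{-1}$ is again admissible, so there exists $k_{J,v}\in T(F_v)$ with $w_{J,v}:=k_{J,v}^{-1}J$ in the normalizer of $\cO_{N,v}^\times$. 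It remains to identify the coset of $w_{J,v}$ modulo $\cO_{N,v}^\times$: at $v\nmid N$ this is trivial; at $v\mid N$ with $\ord_v(c)\geq\ord_v(N)$ the two maximal orders of the admissibility definition are distinguished by the Galois-stable intersections $\cO_{c,v}\neq\cO_{c/N,v}$ with $E_v$, so conjugation by $J$ preserves each and $w_{J,v}\in\cO_{N,v}^\times$; at $v\in S^D$ (where $B_v$ is split) a direct matrix computation using the image $J\mapsto\big(\begin{smallmatrix}0&M\\1&0\end{smallmatrix}\big)\in\M_2(F_v)$ produces $J=k_{J,v}w_v$ with $k_{J,v}$ diagonal.

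The hard part is the subcase $v\in S_1$ with $v\mid D$: here $B_v$ is a quaternion division algebra, $\cO_{N,v}$ is its unique maximal order, and the normalizer of $\cO_{N,v}^\times$ modulo $\cO_{N,v}^\times$ has two classes, represented by $1$ and the quaternionic uniformizer $\pi_B$. To show $w_{J,v}\in\cO_{N,v}^\times$ one must exclude the $\pi_B$-coset; I expect to do so by comparing the reduced-norm class of $J$ modulo $N_{E_v/F_v}(E_v^\times)\cdot F_v^{\times 2}$. This is immediate when $E_v/F_v$ is ramified (then $N_{E_v/F_v}$ is surjective), and when $E_v/F_v$ is inert one must invoke the root number hypothesis (iii) to constrain $\ord_v(\mathrm{Nrd}(J))\bmod 2$.
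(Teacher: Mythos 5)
Your overall strategy — localize, invoke uniqueness of admissible orders up to $T(F_v)$-conjugacy to write $J = k_{J,v} w_{J,v}$ with $w_{J,v}$ in the normalizer of $\cO_{N,v}^\times$, then pin down the coset — matches the paper's, and your Galois-fixed-intersection argument for the case $\ord_v(c)\geq\ord_v(N)$ is a nice alternative to the paper's explicit matrix computation. But the case analysis contains a basic notational confusion that produces genuine gaps. In this paper $D$ is the discriminant of $E/F$, not of $B$, so $v\mid D$ means $E_v/F_v$ is ramified, not that $B_v$ is division. With hypotheses (i)--(iii) in force, the subcase ``$v\in S_1$, $v\mid D$'' allows $B_v$ to be either split or division, depending on the local root number, and your treatment handles neither correctly. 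Conversely, for $v\in S_1$ with $v\nmid D$ (so $E_v$ inert), the split case must actively be \emph{ruled out} (via the structure $\cO_{N,v}=\cO_{E,v}+I\M_2(\cO_{F,v})$ with $\mathrm{length}_{\cO_{F,v}}(\cO_{E,v}/I)=1$, impossible for residue degree $2$); your parenthetical remarks ``the standard level-$\varpi_v$ Eichler order when $v\nmid D$'' and ``at $v\in S^D$ (where $B_v$ is split)'' are therefore false statements.

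The proposed mechanism for the ``hard part'' also does not hold up. The claim that $N_{E_v/F_v}$ is surjective when $E_v/F_v$ is ramified is wrong: local class field theory gives index $2$, and the ramified norm group is not the even-valuation subgroup, so the reduced-norm-class argument cannot by itself exclude the $\pi_B$-coset. In fact, distinguishing the trivial coset from the Atkin-Lehner coset in the $\ord_v(c)=0$, $\ord_v(N)=1$ subcase requires a concrete description of $J$ relative to the admissible embedding, and the answer genuinely differs between the ramified and inert subcases (which is exactly why $S^D$ excludes $v\mid D$). The paper settles this by explicitly exhibiting $J$ in $k_J\cO_{N,v}^\times$ (ramified: $J_0\in\cO_{B,v}^\times$ if $B_v$ is division, or $J=k_J\big(\begin{smallmatrix}-1&\\ {\rm Tr}(\tau)&1\end{smallmatrix}\big)\in k_J R_0(N)_v^\times$ if $B_v$ is split) versus in $k_J w_v\cO_{N,v}^\times$ (inert: $B_v$ division and $J_0$ is the uniformizer). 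You need some such direct computation; the norm-class heuristic alone does not suffice.
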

\begin{proof}
    We have seen that the admissible order $\cO_N$ is unique up to $T(\A_F^\infty)$-conjugation, hence $J=k_Jw_J$ where $w_J$ is in the normalizer of $\cO_{N}$. We will divide the proof in several cases:
    \begin{itemize}
        \item \emph{Case $T(F_v)$ splits and $\ord_v(c)\geq\ord_v(N)$}: In this situation $B_v\simeq\M_2(F_v)$. Without loss of generality, we can assume that the embedding $\imath: E_v\simeq F_v^2\hookrightarrow\M_2(F_v)$ is given by  (see \cite[proof of proposition 3.12]{CST})
        \[
        \imath(a,b)=\gamma_c^{-1}\left(\begin{array}{cc}
            a &  \\
             & b
        \end{array}\right)\gamma_c;\qquad \gamma_c=\left(\begin{array}{cc}
            1 & x^{-1} \\
             & 1
        \end{array}\right),\qquad {\rm ord}_v(x)={\rm ord}_v(c).
        \]
        Then the usual Eichler order $R_0(N)_v=\big(\begin{smallmatrix}
            \cO_{F,v}&\cO_{F,v}\\N\cO_{F,v}&\cO_{F,v}
        \end{smallmatrix}\big)$ is admissible for $(N,\chi_v)$. Thus, $\cO_{N,v}=k_0^{-1}R_0(N)_v k_0$, for some $k_0\in T(F_v)$ and $\cO_{N,v}$ is an Eichler order. Moreover, there exists $k_1\in T(F_v)$ such that $J=k_1\big(\begin{smallmatrix}
            -1&\\  x&1
        \end{smallmatrix}\big)$. Hence, if we set $k_{J,v}=k_1k_0^2$ then 
        \[
        J=k_1\left(\begin{array}{cc}
            -1&\\  x&1
        \end{array}\right)= k_1k_0^2k_0^{-1}\left(\begin{array}{cc}
            -1&\\  x&1
        \end{array}\right)k_0\in k_{J,v}\cO_{N,v}^\times.
        \]

        \item \emph{Case $T(F_v)$ non-split and $\ord_v(c)\geq\ord_v(N)$}: By \cite[lemma 3.2(5)]{CST} we have $B_v\simeq\M_2(F_v)$. Write $\cO_{E,v}=\cO_{F.v}[\tau]$, where if $E_v/F_v$ ramifies then $\tau$ is an uniformizer. Without loss of generality, we can assume that the embedding $\imath: E_v\hookrightarrow\M_2(F_v)$ is given by
        \begin{equation}\label{imathnonsplit}
            \imath(a+b\tau)=\gamma_c^{-1}\left(\begin{array}{cc}
            a+b{\rm Tr}(\tau) & b{\rm N}(\tau)  \\
            -b & a
        \end{array}\right)\gamma_c;\qquad \gamma_c=\left(\begin{array}{cc}
            x{\rm N}(\tau) &  \\
             & 1
        \end{array}\right),\qquad {\rm ord}_v(x)={\rm ord}_v(c),
        \end{equation}
        being ${\rm Tr},{\rm N}$ the reduced trace and norm, respectively. Similarly as above, $R_0(N)_v$ is admissible for $(N,\chi_v)$. Thus, $\cO_{N,v}=k_0^{-1}R_0(N)_v k_0$, for some $k_0\in T(F_v)$ and $\cO_{N,v}$ is an Eichler order. Moreover, there exists $k_1\in T(F_v)$ such that $J=k_1\big(\begin{smallmatrix}
            -1&\\  x{\rm Tr}(\tau)&1
        \end{smallmatrix}\big)$. Hence, if we set $k_{J,v}=k_1k_0^2$ then 
        $J\in k_{J,v}\cO_{N,v}^\times$.

        \item \emph{Case $T(F_v)$ splits and $0=\ord_v(c)<\ord_v(N)$}. In this situation $B_v\simeq\M_2(F_v)$ as well, and we can assume that $\imath(a,b)=\big(\begin{smallmatrix}
            a &  \\
             & b
        \end{smallmatrix}\big)$. Thus, the Eichler order $R_0(N)_v$ is admissible for $(N,\chi_v)$ and there exists $k_1\in T(F_v)$ such that $J=k_1\big(\begin{smallmatrix}
             & 1 \\
            x & 
        \end{smallmatrix}\big)$ with ${\rm ord}_v(x)={\rm ord}_v(N)$. This implies that $\cO_{N,v}=k_0^{-1}R_0(N)_v k_0$, for some $k_0\in T(F_v)$, and $J=k_{J,v}w_v$, where $k_{J,v}=k_1k_0^2$ and $w_v=k_0^{-1}\big(\begin{smallmatrix}
             & 1 \\
            x & 
        \end{smallmatrix}\big)k_0$ is the usual Atkin-Lehner involution.

        \item \emph{Case $T(F_v)$ non-split, $\ord_v(c)=0$ and $\ord_v(N)= 1$}. By \cite[lemma 3.4]{CST} the admissible module $\cO_{N,v}$ is unique. If $B_v\not\simeq\M_2(F_v)$ then $\cO_{N,v}=\cO_{B,v}$ is the maximal order (in particular it is an Eichler order). Notice that $\cO_{B,v}=\cO_{E,v}+J_0\cO_{E,v}$, where $J_0$ normalizes $\cO_{E,v}$. Moreover, $J_0\in\cO_{B,v}^\times$, if $E_v/F_v$ ramifies, and $J_0=w_v$ the Atkin-Lehner involution, if $E_v/F_v$ is inert. Since $J=k_{J,v}J_0$, the result follows in this case. Assume now that $B_v\simeq\M_2(F_v)$ and the embedding $\imath:E_v\hookrightarrow\M_2(F_v)$ is the one described in \eqref{imathnonsplit}. By \cite[(3.3)]{Gross88} the admissible order $\cO_{N,v}=\cO_{E,v}+I\M_2(\cO_{F,v})$, where $I\subset\cO_{E,v}$ is a non-zero ideal of $E_v$ with ${\rm length}_{\cO_{F,v}}(\cO_{E,v}/I)=1$. This excludes the inert case. Moreover, in the ramified case $\cO_{N,v}=\cO_{E,v}+\tau\M_2(\cO_{F,v})=R_0(N)_v$ is an Eichler order and $J=k_J\big(\begin{smallmatrix}
            -1&\\  {\rm Tr}(\tau)&1
        \end{smallmatrix}\big)\in k_J\cO_{N,v}^\times$.
    \end{itemize} 
\end{proof}

\begin{remark}\label{remEO}
    If we are not in the situation of the above lemma then there is no unique $T(F_v)$-conjugacy class of admissible Eichler orders. Indeed, if $T(F_v)$ splits and $0<\ord_v(c)<\ord_v(N)$ then there are two $T(F_v)$-conjugacy classes of $(N,\chi_v)$-admissible orders (in this case the admissible order is an Eichler order). Finally, if $v\in S_1$ then by \cite[(3.3)]{Gross88} the admissible order $\cO_{N,v}=\cO_{E,v}+I\M_2(\cO_{F,v})$, where $I\subset\cO_{E,v}$ is a non-zero ideal of $E_v$ with ${\rm length}_{\cO_{F,v}}(\cO_{E,v}/I)=\ord_v(N)$, and it is clear that it is not an Eichler order if $\ord_v(N)> 1$.
\end{remark}
Given an Eichler order $\cO_N$, write ${\rm Norm}(\hat\cO_N^\times)$ for the normalizer of $\hat\cO_N^\times$ in $G(\A_F^\infty)$. Now we can state the main result of the paper:
\begin{theorem}\label{mainTHM}
Let $\chi:T(\A_F)/T(F)\rightarrow\C^\times$ be a locally polynomial character  such that  $\chi\mid_{T(F_\infty)}(t_\infty)=t_\infty^{\underline{m}}\lambda(t_\infty)$, for some $\underline{m}\in\Z^{\Sigma_F}$, with $\frac{2-\underline{k}}{2}\leq \underline{m}\leq\frac{\underline{k}-2}{2}$.
    Assume that for all $v\nmid\infty$ the local root number $\epsilon(1/2,\pi_v,\chi_v)=\chi_v\eta_{T,v}(-1)\epsilon(B_v)$, and either $\ord_v(c)\geq \ord_v(N)$ or $\ord_v(c)=0$, with $\ord_v(N)\leq 1$ in case $\ord_v(c)=0$ and $T(F_v)$ non-split. Let $\cO_N$ be an admissible Eichler order for $(N,\chi)$ and write $U=\hat\cO_N^\times$. Then any element $J\in G(F)_+\setminus T(F)_+$ in the normalizer of $T(F)$ is of the form $J=k_J{\rm Norm}(\hat\cO_N)$, where $k_J\in T(\A_F^\infty)$. Moreover, for any decomposable $\phi^\lambda=\bigotimes_{v\nmid\infty}'\phi_{v}^\lambda\in\pi^\infty\subset H_\ast^r(G(F),\cA^\infty(V(\underline{k}-2))(\lambda))$ that
 differ from $\phi_0^\lambda=\bigotimes_{v\nmid\infty}'\phi_{0,v}^\lambda\in (\pi^\infty)^{U}\subseteq H^r(G(F),\cA^\infty(V(\underline{k}-2)^U)(\lambda))$ in a finite set of places $\mathfrak{S}$ we have
   % \[
   % \cP(\phi^\lambda,\chi)^2=\frac{\varepsilon(M)2^{\#S_D}L_{c}(1,\eta_{T})^2h^2
%\bar C(\underline k,\underline m)}{\chi^{-2}(k_{J})M^{\underline m}|c^2 D|^{\frac{1}{2}}}\cdot L^S(1/2,\Pi,\chi)\cdot\frac{\langle \Phi_{0},\Phi_{0}\rangle}{\langle \Psi,\Psi\rangle}\cdot\frac{{\rm vol}(U_0(N))}{{\rm vol}(U)}\prod_{v\in \mathfrak{S}}\frac{\int_{T(F_v)}\chi_v(t_v)\langle\pi_v(t_v)\phi_{v}^\lambda,\pi_v(J)\phi_{v}^\lambda\rangle_v d^\times t_v}{\int_{T(F_v)}\chi_v(t_v)\langle\pi_v(t_v)\phi_{0,v}^\lambda,\pi_v(J)\phi_{0,v}^\lambda\rangle_v d^\times t_v},
   % \]
    \begin{equation}\label{formulasquares}
        \cP(\phi^\lambda,\chi)^2=\frac{\varepsilon({S^D})2^{\#S_D}L_{c}(1,\eta_{T})^2h^2
\bar C(\underline k,\underline m)}{\chi^{-1}(k_{J})M_J^{-\underline m}|c^2 D|^{\frac{1}{2}}}\cdot L^S(1/2,\Pi,\chi)\cdot\frac{\langle \Phi,\Phi\rangle}{\langle \Psi,\Psi\rangle}\cdot\frac{{\rm vol}(U_0(N))}{{\rm vol}(U)}\prod_{v\in \mathfrak{S}}\frac{\beta_v(\phi_v^\lambda,J)}{\beta_v(\phi_{0,v}^\lambda,J)},
    \end{equation}
    where $S:=\{v\mid (N,c)\}$, $S_D=\{v\mid (N,D),\;\ord_v(c)=0\}$, $\Phi\in M_{\underline k}(U)$ is such that ${\rm ES}_\lambda(\Phi)=\phi^\lambda$, $\varepsilon(S^D)$ is the eigenvalue of the Atkin-Lehner operator $w_{S^D}$ on $(\pi^{\infty})^U$ and
    \[
    \beta_v(\phi_v^\lambda,J)=\int_{T(F_v)}\chi_v(t_v)\frac{\langle\pi_v(t_v)\phi_{v}^\lambda,\pi_v(J)\phi_{v}^\lambda\rangle_v}{\langle\phi_{v}^\lambda,\phi_{v}^\lambda\rangle_v} d^\times t_v.
    \]
\end{theorem}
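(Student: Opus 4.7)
The plan is to derive the theorem directly from Corollary \ref{coroWFHC} by combining it with the structural information in Lemma \ref{lemmaadmiorder}. Under the theorem's hypotheses the lemma applies, so the admissible order $\cO_N$ for $(N,\chi)$ is Eichler and any element $J$ in the normalizer of $T(F)$ admits the decomposition $J = k_J w_J$ with $k_J \in T(\A_F^\infty)$ and $w_J \in w_{S^D}\hat\cO_N^\times$. Moreover, these hypotheses force $c_1 = c$ (since every $v\in S_1$ has $\ord_v(c)=0$, no prime of $c$ lies in $S_1$), the sets $S$ and $S_D$ of Corollary \ref{coroWFHC} reduce to the simpler forms in the theorem statement, and $V(\pi^\infty,\chi) = V(\pi^\infty,\chi^{-1}) = (\pi^\infty)^U$ with $U = \hat\cO_N^\times$. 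In particular there is a unique (up to scalar) test vector $\phi_0^\lambda$, and both $V$'s coincide with $(\pi^\infty)^U$.

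The core computation is to identify the pairing $\langle \Phi,\pi(J)\Phi\rangle$ appearing in Corollary \ref{coroWFHC}. Using the $U$-invariance of $\phi_0^\lambda$ and the Atkin-Lehner eigenvalue of $w_{S^D}$, I first obtain
\[
\pi^\infty(J)\phi_0^\lambda \;=\; \pi^\infty(k_J)\pi^\infty(w_J)\phi_0^\lambda \;=\; \varepsilon(S^D)\,\pi^\infty(k_J)\phi_0^\lambda.
\]
The next step is to show that $\pi^\infty(k_J)$ acts on $\phi_0^\lambda$ by the scalar $\chi(k_J)$, so that the combined factor $\varepsilon(S^D)\chi(k_J) = \varepsilon(S^D)/\chi^{-1}(k_J)$ reproduces the denominator of the formula. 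At a place $v\in S_1$ this follows from the $\chi_v^{-1}$-eigenform condition defining $V(\pi^\infty,\chi)$ combined with the fact that $JtJ^{-1}=t^{-1}$ forces $\pi(J)\phi_0^\lambda$ to be $\chi$-eigen. At $v\notin S_1$ one uses the explicit form of $k_{J,v}$ supplied by the case analysis in the proof of Lemma \ref{lemmaadmiorder}, together with the triviality of $\chi_v$ on $\cO_{c,v}^\times$ and the $\cO_{N,v}^\times$-invariance of $\phi_{0,v}^\lambda$, to conclude that $k_{J,v}$ does act through the character. Combined with the observation that $\mathfrak{S}_0 = \emptyset$ (since $\pi(J)\phi_0^\lambda$ is a scalar multiple of $\phi_0^\lambda$, so lies in $(\pi^\infty)^U$), this proves the theorem when $\phi^\lambda=\phi_0^\lambda$.

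To pass to a general decomposable $\phi^\lambda$ differing from $\phi_0^\lambda$ at a finite set $\mathfrak{S}$, I would invoke the one-dimensionality of $\Hom_{T(F_v)}(\pi_v\otimes\chi_v,\C)\otimes\Hom_{T(F_v)}(\pi_v\otimes\chi_v^{-1},\C)$ that already appeared in the proof of Corollary \ref{coroWFHC}. Since both sides of
\[
\cP(\,\cdot\,,\chi)\cdot\cP(\pi(J)\,\cdot\,,\chi^{-1}) \;\;\text{and}\;\; \prod_v \beta_v(\,\cdot\,,\pi_v(J)\,\cdot\,)
\]
lie in this one-dimensional space, the ratio between $\phi^\lambda$ and $\phi_0^\lambda$ factors as a finite product over $\mathfrak{S}$. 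Together with the global identity
\[
\cP(\pi(J)\phi^\lambda,\chi^{-1}) \;=\; M_J^{-\underline m}(-1)^{\frac{\underline k-2}{2}+\underline m}(-1)^r\,\cP(\phi^\lambda,\chi)
\]
derived just before Corollary \ref{coroWFHC}, this yields a formula for $\cP(\phi^\lambda,\chi)^2/\cP(\phi_0^\lambda,\chi)^2$ as a local product. The last task is to convert these local factors into the theorem's normalization $\beta_v(\phi_v^\lambda,J)$, which is a direct manipulation using the definitional identity
\[
\beta_v(\phi_v^\lambda,J)\,\langle\phi_v^\lambda,\phi_v^\lambda\rangle_v \;=\; \beta_v(\phi_v^\lambda,\pi_v(J)\phi_v^\lambda)\,\langle\phi_v^\lambda,\pi_v(J)\phi_v^\lambda\rangle_v;
\]
the extra normalization factors at $v\in\mathfrak{S}$ cancel against those at $v\notin\mathfrak{S}$ because their Euler product reconstructs the global pairing $\langle\Phi,\pi(J)\Phi\rangle/\langle\Phi,\Phi\rangle$ computed in the second paragraph.

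The main obstacle I expect is the local verification at $v\notin S_1$ in the second paragraph: proving that $\pi_v(k_{J,v})$ acts by $\chi_v(k_{J,v})$ on $\phi_{0,v}^\lambda$ requires inspecting each of the four scenarios in the proof of Lemma \ref{lemmaadmiorder} (split versus non-split, $\ord_v(c)\geq \ord_v(N)$ versus $\ord_v(c)=0$), tracking carefully the interplay between the specific form of $k_{J,v}$, the $\cO_{N,v}^\times$-invariance of $\phi_{0,v}^\lambda$, and the triviality of $\chi_v$ on $\cO_{c,v}^\times$. Getting the precise character sign so that $\chi^{-1}(k_J)$ lands in the denominator of \eqref{formulasquares}, and verifying the cancellation of local normalizations with the global pairing $\langle\Phi,\pi(J)\Phi\rangle$, is the most delicate bookkeeping.
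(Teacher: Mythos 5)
Your first and third paragraphs follow the paper's logic (reduce to Corollary \ref{coroWFHC} plus Lemma \ref{lemmaadmiorder}; pass from $\phi_0^\lambda$ to general decomposable $\phi^\lambda$ via Saito--Tunnell one-dimensionality), but the central step of your second paragraph contains a genuine gap. You assert that $\pi^\infty(k_J)$ acts on $\phi_0^\lambda$ by the scalar $\chi(k_J)$. This is false in general: $k_J\in T(\A_F^\infty)$ does not normalize $U=\hat\cO_N^\times$, so $\pi(k_J)\phi_0^\lambda$ is $k_JUk_J^{-1}$-invariant but need not lie in $(\pi^\infty)^U$, let alone be proportional to $\phi_0^\lambda$. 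At $v\notin S_1$ the vector $\phi_{0,v}^\lambda$ is a newvector in $\pi_v^{U_v}$, not a $T(F_v)$-eigenvector; the $\chi_v^{-1}$-eigenform condition is imposed \emph{only} at $v\in S_1$ in Definition \ref{defV1}. Consequently your inference ``$\mathfrak{S}_0=\emptyset$ because $\pi(J)\phi_0^\lambda$ is a scalar multiple of $\phi_0^\lambda$'' also breaks down: the discussion following Corollary \ref{coroWFHC} shows $v\in\mathfrak{S}_0$ exactly when $k_{J,v}\notin\cO_{c,v}^\times$, which can certainly happen.

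The correct mechanism, and what the paper actually does, is to never require $\pi_v(k_{J,v})$ to preserve $\pi_v^{U_v}$. Writing $J=k_{J,v}w_{J,v}$ with $w_{J,v}\in w_{S^D}\hat\cO_{N,v}^\times$ (so $\pi_v(w_{J,v})\phi_{0,v}^\lambda$ \emph{is} a scalar multiple of $\phi_{0,v}^\lambda$, by Atkin--Lehner), one exploits the $T(F_v)$-invariance of $\langle\;,\;\rangle_v$ and a change of variable $t_v\mapsto k_{J,v}t_v$ inside the $\beta_v$ integral:
\[
\int_{T(F_v)}\chi_v(t_v)\langle\pi_v(t_v)\phi_{0,v}^\lambda,\pi_v(k_{J,v})\phi_{0,v}^\lambda\rangle_v\,d^\times t_v
=\chi_v(k_{J,v})\int_{T(F_v)}\chi_v(t_v)\langle\pi_v(t_v)\phi_{0,v}^\lambda,\phi_{0,v}^\lambda\rangle_v\,d^\times t_v.
\]
Multiplying these ratios over all $v\nmid\infty$ produces $\varepsilon(S^D)\chi(k_J)$ as the value of $\langle\Phi_0,\pi(J)\Phi_0\rangle\prod_v\beta_v(\phi_{0,v}^\lambda,\pi(J)\phi_{0,v}^\lambda)/\beta_v(\phi_{0,v}^\lambda,\phi_{0,v}^\lambda)$ relative to $\langle\Phi_0,\Phi_0\rangle$, and Corollary \ref{coroWFHC} then yields the formula for $\cP(\phi_0^\lambda,\chi)^2$. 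You should replace your local eigenvector claim, and the resulting ``$\mathfrak{S}_0=\emptyset$'' step, by this change-of-variable argument.
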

\begin{proof}
    By our hypothesis it is clear that $c=c_1$, $S:=\{v\mid (N,c)\}$ and $S_D=\{v\mid (N,D),\;\ord_v(c)=0\}$. By \cite[proposition 3.8]{CST} and \cite[lemma 3.1(3)]{CST} we have $V(\pi_v,\chi_v)=V(\pi_v,\chi_v^{-1})=\pi_v^{U_v}$ for all $v\nmid\infty$, where $U_v=\cO_{N,v}^\times$. 
    By lemma \ref{lemmaadmiorder}, we can write $J\in k_{J}w_{S^D}U$. Thus,
    \begin{eqnarray*}
        \langle \Phi_{0},\pi(J)\Phi_{0}\rangle\prod_{v\nmid\infty}\frac{\beta_{v}(\phi_{0,v}^\lambda,\pi(J)\phi_{0,v}^\lambda)}{\beta_{v}(\phi_{0,v}^\lambda,\phi_{0,v}^\lambda)}&=&\langle \Phi_{0},\Phi_{0}\rangle\prod_{v\nmid\infty}\frac{\int_{T(F_v)}\chi_v(t_v)\langle\pi_v(t_v)\phi_{0,v}^\lambda,\pi_v(J)\phi_{0,v}^\lambda\rangle_v d^\times t_v}{\int_{T(F_v)}\chi_v(t_v)\langle\pi_v(t_v)\phi_{0,v}^\lambda,\phi_{0,v}^\lambda\rangle_v d^\times t_v}\\
        &=&\varepsilon({S^D})\cdot\chi(k_{J})\cdot\langle \Phi_{0},\Phi_{0}\rangle
    \end{eqnarray*}
    Applying corollary \ref{coroWFHC}, we deduce that 
    \[
    \cP(\phi_{0}^\lambda,\chi)^2=\frac{\varepsilon({S^D})2^{\#S_D}L_{c}(1,\eta_{T})^2h^2
\bar C(\underline k,\underline m)}{\chi^{-1}(k_{J})M^{-\underline m}|c^2 D|^{\frac{1}{2}}}\cdot L^S(1/2,\Pi,\chi)\cdot\frac{\langle \Phi_{0},\Phi_{0}\rangle}{\langle \Psi,\Psi\rangle}\cdot\frac{{\rm vol}(U_0(N))}{{\rm vol}(U)}.
    \]
    Hence, the result follows from the fact that $\dim_\C\Hom_{T(F_v)}\left(\pi_v\otimes\chi_v,\C\right)^{\otimes 2}\leq 1$, by Saito-Tunell, and both
    \[
    \cP(\cdot,\chi)\cdot\cP(\cdot,\chi)\qquad\mbox{and}\qquad\int_{T(F_v)}\chi_v(t_v)\langle\pi_v(t_v)\;\cdot,\pi_v(J)\;\cdot\rangle_v d^\times t_v
    \]
    lie in $\Hom_{T(F_v)}\left(\pi_v\otimes\chi_v,\C\right)^{\otimes 2}$.
\end{proof}
\begin{remark}
    Notice that the left hand side of \eqref{formulasquares} is independent of the choice of $J$, hence, the right hand side should be too. Indeed, if we choose $k\cdot J$ instead for $k\in T(F)_+$ the quotient $\frac{\beta_v(\phi_v,k\cdot J)}{\beta_v(\phi_{0,v},k\cdot J)}=\frac{\chi_v(k)\cdot\beta_v(\phi_v,J)}{\chi_v(k)\cdot\beta_v(\phi_{0,v},J)}=\frac{\beta_v(\phi_v,J)}{\beta_v(\phi_{0,v},J)}$. On the other side, $k_{k\cdot J}=kk_J$ and $M_{k\cdot J}=k\bar k^{-1}M_J$. Hence, if we write $\chi^{\infty}=\chi\mid_{T(\A_F^\infty)}$ then
    \[
    \chi^{-1}(k_{k\cdot J})M_{k\cdot J}^{-\underline m}=\chi^\infty(k)^{-1}k^{-\underline m}\chi^{-1}(k_{J})M_{J}^{-\underline m}=\chi(k)^{-1}\chi^{-1}(k_{J})M_{J}^{-\underline m}=\chi^{-1}(k_{J})M_{J}^{-\underline m},
    \]
    because $k\in T(F)_+$.
\end{remark}

\begin{remark}\label{remarklastcase}
    The only local factor not computed in corollary \ref{coroWFHC} is the case $v\not\in S_1$ and $0<\ord_v(c)<\ord_v(N)$. But one can also compute such local factor at the cost of not being so explicit. Indeed, in this situations we can assume that the admissible order $\cO_{N,v}$ is $T(F_v)\simeq F_v^\times$-conjugated to the usual (upper-triangular modulo $N$) Eichler order $R_0(N)_v$ and the embedding $\imath:F_v^2\hookrightarrow\M_2(F_v)$ is either $\imath_1$ or $\imath_2$
    \[
\iota_1(a,b)=\gamma_c^{-1}\left(\begin{array}{cc}
    a &  \\
     & b
\end{array}\right)\gamma_c;\qquad \iota_2(a,b)=\gamma_c^{-1}\left(\begin{array}{cc}
    b &  \\
     & a
\end{array}\right)\gamma_c;\qquad \gamma_c=\left(\begin{array}{cc}
    1 & x^{-1} \\
     & 1
\end{array}\right),\quad{\rm ord}_v(x)=\ord_v(c).
\]
If we consider the realization of $\pi_v$ in its Whittaker model for some additive character $\psi_v$ and  $W_v\in \pi_v^{R_0(N)_v}$, one obtains that, up to a factor of the form $\chi_v(k)$ for some $k\in T(F_v)$, 
\begin{eqnarray*}
    \frac{\langle \phi_{0,v}^\lambda,\pi_v(J)\phi_{0,v}^\lambda\rangle_v}{\langle \phi_{0,v}^\lambda,\phi_{0,v}^\lambda\rangle_v}\frac{\beta_{v}(\phi_{0,v}^\lambda,\pi_v(J)\phi_{0,v}^\lambda)}{\beta_{v}(\phi_{0,v}^\lambda,\phi_{0,v}^\lambda)}&=&\frac{\int_{T(F_v)}\chi_v(t_v)\langle\pi_v(t_v)W_v,\pi_v(J)\overline{W_v}\rangle_v d^\times t_v}{\int_{T(F_v)}\chi_v(t_v)\langle\pi_v(t_v)W_v,\overline{W_v}\rangle_v d^\times t_v}\\    %&=&\frac{\int_{F_v^\times}\chi_v(t_v)\langle\pi_v\big(\gamma_c^{-1}\big(\begin{smallmatrix}t_v&\\&1\end{smallmatrix}\big)\gamma_c\big)W_v,\pi_v\big(\gamma_c^{-1}w\gamma_c\big)\overline{W_v}\rangle_v d^\times t_v}{\int_{T(F_v)}\chi_v(t_v)\langle\pi_v\big(\gamma_c^{-1}\big(\begin{smallmatrix}t_v&\\&1\end{smallmatrix}\big)\gamma_c\big)W_v,\overline{W_v}\rangle_v d^\times t_v}\\   
&=&\frac{\int_{F_v^\times}\chi_v^{\pm 1}(t_v)\langle\pi_v\big(\big(\begin{smallmatrix}t_v&\\&1\end{smallmatrix}\big)\gamma_c\big)W_v,\pi_v\big(w\gamma_c\big)\overline{W_v}\rangle_v d^\times t_v}{\int_{F_v^\times}\chi_v^{\pm 1}(t_v)\langle\pi_v\big(\big(\begin{smallmatrix}t_v&\\&1\end{smallmatrix}\big)\gamma_c\big)W_v,\pi_v(\gamma_c)\overline{W_v}\rangle_v d^\times t_v}\\ &=&\frac{\int_{F_v^\times}\chi_v^{\pm 1}(t_v)\int_{F_v^\times}\left(\pi_v\big(\gamma_c\big)W_v\right)\big(\begin{smallmatrix}at_v&\\&1\end{smallmatrix}\big)\left(\pi_v\big(w\gamma_c\big)\overline{W_v}\right)\big(\begin{smallmatrix}a&\\&1\end{smallmatrix}\big)d^\times a d^\times t_v}{\int_{F_v^\times}\chi_v^{\pm 1}(t_v)\int_{F_v^\times}\left(\pi_v\big(\gamma_c\big)W_v\right)\big(\begin{smallmatrix}at_v&\\&1\end{smallmatrix}\big)\left(\pi_v(\gamma_c)\overline{W_v}\right)\big(\begin{smallmatrix}a&\\&1\end{smallmatrix}\big)d^\times a d^\times t_v}\\ %&=&\frac{\int_{F_v^\times}\chi_v(b)\left(\pi_v\big(\gamma_c\big)W_v\right)\big(\begin{smallmatrix}b&\\&1\end{smallmatrix}\big) d^\times b\int_{F_v^\times}\chi_v(a)^{-1}\left(\pi_v\big(w\gamma_c\big)\overline{W_v}\right)\big(\begin{smallmatrix}a&\\&1\end{smallmatrix}\big)d^\times a}{\int_{T(F_v)}\chi_v(b)\left(\pi_v\big(\gamma_c\big)W_v\right)\big(\begin{smallmatrix}b&\\&1\end{smallmatrix}\big) d^\times b\int_{F_v^\times}\chi_v(a)^{-1}\left(\pi_v(\gamma_c)\overline{W_v}\right)\big(\begin{smallmatrix}a&\\&1\end{smallmatrix}\big)d^\times a}\\ 
    &=&\frac{Z\left(\frac{1}{2},\pi_v\big(w\gamma_c\big)\overline{W_v},\chi_v^{\mp1}\right)}{Z\left(\frac{1}{2}, \pi_v(\gamma_c)\overline{W_v},\chi_v^{\mp1}\right)}=\gamma\left(\frac{1}{2},\pi_v,\chi_v,\bar\psi_v\right)\frac{Z\left(\frac{1}{2},\pi_v\big(\gamma_c\big)\overline{W_v},\chi_v^{\pm 1}\right)}{Z\left(\frac{1}{2}, \pi_v(\gamma_c)W_v^-,\chi_v^{\mp 1}\right)},
\end{eqnarray*}
where $w=\big(\begin{smallmatrix}&-1\\1&\end{smallmatrix}\big)$, $Z(s,W,\chi)=\int_{F_v^\times}\chi(t_v)|t_v|^{s-\frac{1}{2}}W\big(\begin{smallmatrix}t_v&\\&1\end{smallmatrix}\big)d^\times t_v$ and $\gamma\left(s,\pi_v,\chi_v,\bar\psi_v\right)$ is the factor of the local functional equation (see \cite[theorem 4.7.5]{Bump}). Since $\ord_v(c)=\ord_v(x)>0$ and $\overline{W_v}\big(\begin{smallmatrix}a&\\&1\end{smallmatrix}\big)$ is $\cO_{F,v}^\times$-invariant, we have
\begin{eqnarray*}
    Z\left(\frac{1}{2}, \pi_v(\gamma_c)\overline{W_v},\chi_v^{\pm 1}\right)&=&\int_{F_v^\times}\chi_v^{\pm 1}(a)\left(\pi_v\big(\gamma_c\big)\overline{W_v}\right)\big(\begin{smallmatrix}a&\\&1\end{smallmatrix}\big) d^\times a=\int_{F_v^\times}\chi_v^{\pm 1}(a)\bar\psi_v(x^{-1}a)\overline{W_v}\big(\begin{smallmatrix}a&\\&1\end{smallmatrix}\big) d^\times a\\
    &=&\sum_{k\in \Z}\overline{W_v}\big(\begin{smallmatrix}\varpi_v^k&\\&1\end{smallmatrix}\big)\int_{\cO_{F,v}^\times}\chi_v^{\pm 1}(\varpi_v^ka)\bar\psi_v(x^{-1}\varpi_v^{k}a) d^\times a,
\end{eqnarray*}
where $\varpi_v$ is a uniformizer of $F_v$. By \cite[lemma 2.2]{Spiess} we have that $\int_{\cO_{F,v}^\times}\chi_v^{\pm 1}(a)\bar\psi_v(x^{-1}\varpi_v^{k}a) d^\times a=0$ unless $\ker\psi_v=\varpi_v^{k}\cO_{F,v}$. Hence, if $\ker\psi_v=\varpi_v^{k_0}\cO_{F,v}$ then 
\[
 \frac{\langle \phi_{0,v}^\lambda,\pi_v(J)\phi_{0,v}^\lambda\rangle_v}{\langle \phi_{0,v}^\lambda,\phi_{0,v}^\lambda\rangle_v}\frac{\beta_{v}(\phi_{0,v}^\lambda,\pi_v(J)\phi_{0,v}^\lambda)}{\beta_{v}(\phi_{0,v}^\lambda,\phi_{0,v}^\lambda)}=\gamma\left(\frac{1}{2},\pi_v,\chi_v,\bar\psi_v\right)\chi_v^{\pm 1}(\varpi_v^{2k_0})\frac{\tau(\chi_v^{\pm 1},\bar\psi_v,x)}{\tau(\chi_v^{\mp 1},\bar\psi_v,x)},
\]
where $\tau(\chi_v,\bar\psi_v,x)=\int_{\cO_{F,v}^\times}\chi_v(a)\bar\psi_v(x^{-1}\varpi_v^{k_0}a) d^\times a$ is a Gauss sum.
\end{remark}

%\section{Beyond}

%\begin{proof}
%Moreover, $\tau(\chi_v,\psi_v)=\int_{F_v^\times}\chi_v(a)\psi_v(a) d^\times a$ satisfies
%\[
%\overline{\tau(\chi_v,\psi_v)}=\chi_v(-1)\tau(\chi_v^{-1},\psi_v);\qquad |\tau(\chi_v,\psi_v)|^2=|\delta|^{\frac{1}{2}}\xi_v(1)q^{-n_c}.
%\]
%Thus, 
%\[
%\frac{\int_{\cO_v^\times}\chi_v(a)\psi'(\varpi_v^{-n_c}a) d^\times a}{\int_{\cO_v^\times}\chi_v^{- 1}(a)\psi'(\varpi_v^{-n_c}a) d^\times a}=\chi_v(-c^2\delta^2)|\delta|^{-\frac{1}{2}}\xi_v(1)^{-1}q^{n_c}\tau(\chi_v,\psi_v)^2
%\]
%\end{proof}

%\section{Local non-archimedean theory}

\Addresses

\printbibliography

\end{document}